\newtheorem{thm}{Theorem}[section]
\newtheorem{lem}[thm]{Lemma}
\newtheorem{prop}[thm]{Proposition}
\newtheorem{defn}{Definition}[section]
\newtheorem{rem}{Remark}[section]
\newcommand{\R}{{\mathbb R}}
\newcommand{\calE}{{\mathcal E}}
\renewcommand{\to}{\longrightarrow}
\newsavebox{\savepar}
\numberwithin{equation}{section}
\newcounter{labelflag} \setcounter{labelflag}{0}
\newcommand{\labelon}{\setcounter{labelflag}{1}}
\newcommand{\Label}[1]{
                       \ifnum\thelabelflag=1
                          \ifmmode
                             \makebox[0in][l]{\qquad\fbox{\rm#1}}
                          \else
                             \marginpar{\vspace{0.7\baselineskip}
                                        \hspace{-1.1\textwidth}
                                        \fbox{\rm#1}}
                          \fi
                       \fi
                       \label{#1}
                      }
\newcommand{\diffM}{{\rm Diff}(M)}
\newcommand{\diffzM}{{\rm Diff}_0(M)}
\newcommand{\dvol}{{\rm dvol}}
\newcommand{\np}{\nu_\phi}
\begin{document}

\newpage\null\vskip-4em
  \noindent\scriptsize{}

  \textsc{
MSC Classification: 58D15, 68T99. Keywords: Manifold Learning, Embedding Spaces, Discretized Gradient Flow
} 
\vskip 0.5 in

\normalsize

\title{Discretized Gradient Flow for Manifold Learning}

\author{ Dara Gold } \email{daracaseygold@gmail.com}
       \address{Jenner \& Block LLP (contractor)       } 
     
     \author{Steven Rosenberg} \email{sr@math.bu.edu}
       \address{Department of Mathematics and Statistics\\
       Boston University \\
       Boston, Ma 02215, USA}

\begin{abstract}
Gradient descent, or negative gradient flow, is a standard technique in optimization 
to find minima of functions. 
Many implementations of gradient descent rely on  discretized versions, {\it i.e.}, moving in the gradient direction for a set step size, recomputing the gradient, and continuing.
In this paper, we present an approach to manifold learning where gradient descent takes place in the infinite dimensional space 
 $\calE = {\rm Emb}(M,\mathbb{R}^N)$ of  smooth embeddings $\phi$ of a manifold $M$ into $\mathbb{R}^N$.
Implementing a discretized version of gradient descent for $P:\calE\to \R$, a penalty function that scores an embedding $\phi \in \calE$, requires estimating how far we can move in a fixed direction -- the direction of one gradient step -- before leaving the space of smooth embeddings. Our main result is to give an explicit lower bound for this step length in terms of the Riemannian geometry of $\phi(M)$. In particular, we consider the case when the gradient of $P$ is pointwise normal to the embedded manifold $\phi(M)$. We prove this case arises when $P$ is invariant under diffeomorphisms of $M$, a natural condition in manifold learning.
\end{abstract}

\maketitle

\section{Introduction}

A common approach in data analysis and machine learning is manifold learning, {\it i.e.}, determining how to approximate a finite set of $\{y_i\}_{i=1}^{L}$ in Euclidean space $\R^N$ by a $k$-dimensional embedded, compact manifold $M$ for some 
$k\ll N$ \cite{BNS, CSK, DonohoGrimes2003, GFDH,  MF, RS2000}.  
(The definition of embedding is at the end of the Introduction.)
While classic approaches to non-linear manifold learning include Isometric Mapping (IsoMap), Local-Linear Embeddings (LLE), and Laplacian and Hessian eigenmaps, there is a growing body of work that uses gradient descent of functionals to find manifold representations of high dimensional data.
This mathematical setup involves the space of smooth embeddings $\calE = {\rm Emb}(M,\mathbb{R}^N)$ considered as an open subset of the infinite dimensional vector space of all maps from $M$ to $\R^N$ with the Banach space topology coming from a high Sobolev norm or the $C^\infty$ Fr\'echet space topology. 
We also have a $C^1$ penalty function
$P : \calE \rightarrow \mathbb{R} $ which typically contains a data fitting term and a regularization term, as explained below.  (In keeping with the literature, we assume that $k$ and the diffeomorphism type of $M$ are given.)  In theory, finding a global minimum of $P$ via the negative gradient flow of $P$ on $\calE$ gives
an optimal embedding, or one that ``best fits" the training set $\{y_i\}$.  
The main result of this paper (Theorem \ref{thm:three})  gives precise bounds on the practical implementation issue 
of determining how far one can flow in a fixed gradient direction and still have an embedded manifold.

To avoid overfitting -- or choosing a $\phi$ such that $\phi(M)$
fits the $\{y_i\}$ very closely but performs poorly on new data points --  a penalty function $P$ can  penalize $\phi(M)$ both for being too far from  $\{y_i\}$ and for ``twisting too much" to fit the data. Thus a
  typical penalty function $P = P_1 + P_2:\calE\to\R$ 
  contains two terms:
(i) a data fitting term $P_1(\phi) = \sum_{i=1}^r d^2(\phi(M), y_i),$ where 
$d(\phi(M), y_i)$ is the Euclidean distance from $y_i$ to the closest point in $\phi(M)$;
 (ii)  
 a regularization term $P_2$ designed to prevent overfitting, e.g. 
$P_2(\phi) = \Vert \phi\Vert_s,$ the $s$-Sobolev norm of $\phi$. (For overviews of this standard approach, see \cite{Belkin}, \cite{Smola2001}.)    Gradient descent, {\it i.e.}, moving in the direction of $-\nabla P$ in $\calE$, can find a local or global minimum of $P$, or an optimal manifold to fit $\{y_i\}$.

While there are theoretical challenges with this setup, we focus on an implementation problem in this paper. In theory, to find a negative gradient flow line on  $\calE$, we need to know the gradient of $P$ at each point of $\calE.$ This is generally intractable for computer calculations.  Instead,  the gradient flow is often discretized: we move in the negative gradient direction from an initial point $\phi_0$ for a fixed step size to a new point $\phi_1$, stop and recompute the gradient at $\phi_1$, then iterate until the gradient is smaller than a specified amount. Since a gradient vector in the tangent space of $\calE$ corresponds to a vector field along $\phi(M)$, 
we  need to estimate 
a lower bound $t^* =t^*(\phi)$ 
for how far we can move from a fixed embedding $\phi$ in the negative gradient direction $-\nabla P_\phi$ and still remain in the space of embeddings. 
In summary, we avoid the usual problem that forward geometric flows tend to develop singularities by first discretizing the flow, and then estimating how big a gradient step avoids singularities.

This practical issue is the main focus of this paper. 
In the main result, Theorem \ref{thm:three}, we provide such a lower bound $t^*$, which in effect measures how well discretized flow can approximate the smooth flow. Here $t^*$
depends on  the  local and global extrinsic geometry of $\phi(M)$.

We emphasize that our approach to manifold learning directly tackles the  infinite dimensional nature of this optimization problem via gradient flow and without making any simplifying choices that reduce the problem to finite dimensions. Typical choices in the literature are parametric methods, which fix
a finite dimensional parameter space of embeddings, and RKHS methods, which  reduce the optimization to a finite dimensional problem via the Representation Theorem, but only after making a choice of kernel function. 
In contrast, our approach only assumes that $M$ is compact, possibly with boundary, and so must contend with infinite dimensional analytic issues. Since we are given a finite set of training data, the compactness assumption is reasonable.

We briefly discuss the issues with directly working with the smooth gradient flow on $\calE$. It may be difficult to prove that $P$ is differentiable for typical 
data terms 
which measure the minimum distance from a data point to the embedded manifold 
\cite{BR}. Even if $P$ is differentiable, in this infinite dimensional case it is not clear that a gradient flow line $\gamma(t)$ stays in $\calE$ or converges as $t\to\infty$ to a critical point of $P$, as  $\calE$  is an open dense set in the space of smooth maps from $M$ to $\R^N.$ 
Even if we can prove convergence, since neither $P$ nor $\calE$ is in general convex, a critical point 
need not be a global minimum, and a second derivative test for local minima may be difficult to develop and implement. Perhaps most fundamentally, even the short time existence for the gradient flow may be difficult to establish, particularly if we use the most natural $C^\infty$ topology on $\calE.$  These problems are well known in differential geometry, {\it e.g.,} in 
the study of minimal submanifolds.  
In contrast, discretized gradient flow is both a tool for theoretical results on gradient flow
\cite[Ch.~11]{AGS} and for  computer implementations based on discretized, usually linearized, versions of gradient flow \cite{DR}, although 
there may again be convergence issues \cite{Cooper}.

As an overview of the paper, in \S2, we give a short overview of manifold learning with references to the literature.  \S3 gives an outline of the proof of Theorem
\ref{thm:three}.  In \S4, we argue 
that the entire penalty term should be 
invariant under the diffeomorphism group of $M$, just like 
the data fitting term (i).
In particular, regularization terms built from geometric quantities like the volume or total mean curvature of $M$ have this invariance, while more familiar regularization terms like a Sobolev norm of the embedding do not. We prove in Theorem \ref{thm:one}  that for a diffeomorphism invariant penalty function $P$,  
the gradient vector field $\nabla P_{\phi}$ is guaranteed to be pointwise normal to $\phi(M).$  
\S5 gives the proof of Theorem \ref{thm:three}.  \S6 is a discussion of potential extensions of this work.  Appendix A contains a proof of a quantitative implicit function theorem used in \S5.
\medskip

We recall the technical definition of an embedding of a manifold $M$ into a manifold $W$ ($W= \R^N$ for us).\\

 \begin{defn}
 A smooth map $f:M\to W$ between smooth manifolds is an {\rm immersion} if the differential 
$df_x:T_xM\to T_{f(x)}W$ is injective for all $x\in M.$ An immersion is an {\rm embedding} if $f$ is a homeomorphism from $M$ to $f(M)$ in the induced topology, {\it i.e.,} a set  $V\subset f(M)$ is open iff $V = U\cap f(M)$ for an open set $U\subset W.$
\end{defn}

Since $M$ is compact in this paper, the unwieldy topological condition for an embedding 
simplifies.
\begin{prop}\label{prop:1.1} \cite[Prop.~4.22]{Lee}  If $M$ is compact, a smooth immersion $f:M\to W$ is an embedding.
\end{prop}

\section{Related work} 
Manifold learning is an approach to dimensionality reduction, the attempt to replace high dimensional data in $\R^N, N\gg 0$, by a low dimensional subset. 
Standard techniques in manifold learning, such as Locally Linear Embedding (LLE), IsoMap \cite{Tennenbaum}, Laplacian Eigenmaps \cite{Belkin},  and Hessian Eigenmaps \cite{DonohoGrimes2003}, involve  algorithms that reduce to (often nontrivial) minimization problems in finite dimensions.  In theory, these minimization problems can be solved by Lagrangian multipliers, so gradient descent is not a built in feature of these approaches. (We note that our discretization method is somewhat the reverse of the successful manifold approximation approach of Laplacian eigenmaps, where a discrete set of data in $\R^N$ that apparently lies close to a submanifold is parametrized by a subset of $\R^k$
 through eigenvectors of a graph Laplacian; this parametrization is our $\phi^{-1}.$)

In contrast, our approach is inherently infinite dimensional and relies on gradient flow, as explained in the Introduction.
The use of gradient flow for functionals on infinite dimensional manifolds of maps has a large literature  in machine learning, where this comes under the general heading of nonparametric methods.  (In the parametric approach, one restricts attention to a finite dimensional submanifold  depending on a finite dimensional family of parameters.)
Osher and Sethian introduced the Level Set Method \cite{OS}, which has been applied to machine learning by using gradient desecent on energy functionals (which act on the space of level set functions) to find optimal data-classification boundaries. 
Viewing the decision boundary this way avoids typical problems that arise with cusps and discontinuities in a flow whose speed is curvature dependent.  This work has been extended in many directions, including computer vision and image analysis, fluid mechanics, and classification problems  \cite{Mumford1989, Sethian1999, VW, Ye}. In supervised learning, \cite{BRSW} finds optimal statistical labeling functions by using gradient descent of penalty functionals that include both a data term $P_1$ as above 
and a geometric regularization term $P_2$.
(It should be noted that this paper has to resort to parametric methods to implement the discretized gradient flow algorithm.)  There are intriguing connections between regularization methods and classical physical equations in Lin {\em et al.}
 \cite{Lin2015}. 

Although applied here to manifold learning, the appearance of gradient flow in infinite dimensions of course has its roots in differential geometry.
In minimal submanifold theory, 
the penalty function is the purely geometric volume of the embedded manifold, and the gradient flow is the mean curvature flow.  A sampling of results is in \cite{Ger, Ham2, HS, RT, Xiao}.  Similar to our approach, Mayer \cite{May} uses a discretized approximation to the gradient flow, which more closely mimics implementation processes.  
It is worth noting that historically, 
 the modern study of gradient flow in differential geometry was initiated by Morse \cite{Morse} in the 1930s on the infinite dimensional space of paths on a Riemannian manifold, which was then adapted by Milnor \cite{Mil} to develop Morse theory on finite dimensional manifolds.  In turn, Morse theory has undergone widespread development through Floer theory and its many variants in the past 25 years (see {\it e.g.,} \cite{AD}).

As described in the Introduction, our penalty functions evaluate manifold embeddings with  diffeomorphism invariant data and geometric regularization terms.  The use of such terms is a developing area at the interface of differential geometry and machine learning. \cite{VW} uses the surface area of a decision boundary as the regularization term $P_2(\phi)$, while \cite{BRSW} uses the area of the manifold itself for $P_2(\phi)$, and \cite{Bergmann} uses a discrete version of the total mean curvature of a surface with applications to tomography. This last article contains many references to generalizations of optimization methods to a fixed finite dimensional Riemannian manifold, 
while our interest is in the infinite dimensional space of embeddings.
  Finally, the strongest connection to date between manifold learning and differential geometry is in the work of Fefferman {\it et al.} \cite{  feffermanReconstructionII, feffermanReconstructionI,
fefferman2019, fefferman2020, fefferman2016} on the ``manifold hypothesis." 

Although using gradient descent for manifold learning has widespread applications to machine learning, discretizing the flow - which is needed for most implementations - has many unstudied challenges. \cite{BRSW} for example, which is most closely related to our paper, uses a fixed step size in their gradient flow implementation. Our paper is the first to address the maximum step size that ensures a manifold remains an embedding when finding a low dimensional representation of training data.

\section{Proof Outline for the Discretized Gradient Flow Estimate}
Because of the computational detail in 
\S\ref{sec:flows}, we give an overview of the proof structure and the locations of key results.
\subsection{General Overview}
In Theorem~\ref{thm:one} in \S\ref{sec:condition}, we give a natural condition on the penalty function $P:\calE\to\R$ under which $\nabla P$ is pointwise normal  to an embedding $\phi(M)$. 
   Throughout the paper, we assume 
   that $P$ satisfies this condition.

 Given a pointwise normal vector field $ u$ along $\phi(M)$  with the length of each vector in $u$ at most one,
    \S\ref{sec:flows}, which has our main results, gives a lower bound
    for $t^*$ such that 
    $$\phi_t(m) = \phi(m) + tu_m$$ remains an embedding for all
     $|t| < t^*$.\footnote{  The Euler class  of the normal bundle $e\in H^{N-{\rm dim}(M)}(M)$ is the obstruction to the global existence of a unit normal vector field. Since $e$ may be nonzero,  
we must refer to vector fields whose elements have 
length at most one.  If $N > 2{\rm dim}(M)$, the obstruction vanishes.}  In particular, this applies to $u = k_\phi^{-1}\cdot\nabla P_\phi$, where $k_\phi = \max_{x\in M} \Vert \nabla P_{\phi(x)}\Vert$. Since $M$ is compact, it suffices to prove that  $\phi_t(M)$ is an injective immersion. 

   \subsection{Note on Computation of Key Values}
    Proposition \ref{prop:two} gives a condition under which $\phi_t$ is an immersion, and Theorem \ref{thm:two} defines the bound $t^*$ in which $\phi_t$ is injective. Finally, 
    together with our assumption that $M$ is compact, our main result
    Theorem \ref{thm:three} concludes the mapping is an embedding.
     In the proof of Theorem \ref{thm:two}, $t^*$ is initially a function of the quantities $\epsilon, \delta_H, \delta, K$. $\epsilon$ is defined in \S5.1(1), and
$K$ is explicitly defined in \S5.1(7)  as the maximal principal eigenvalue of $\phi(M).$
In Lemma 4, $\epsilon$ is computed as a function of $\delta$ and $K$, so $t^* = t^*(K, \delta, \delta_H).$  The dependence of $t^*$ on $\delta_H$ is eliminated after (\ref{tstar}), so finally $t^* = t^*(K, \delta).$

    The computation of $\delta$ is significantly more involved.   
    The characterizing property of  $\delta$ is in \S5.1(8). $\delta$ is  defined in (\ref{delta}) as the minimum of a quantity $\delta(q_0, v_0)$, 
    where $(q_0, v_0)$ is in the normal bundle  of $\phi(M)$.  In turn,  $\delta(q_0, v_0)$
    is computed in the proof of Proposition~\ref{prop:three} in three steps, each of which builds on the prior: $\delta^0(q_0, v_0)$ is defined in (\ref{do}), $\delta^1(q_0,v_0)$ is defined by (\ref{d1}), and $\delta^2(q_0,v_0)$ is defined in (\ref{d3}). Finally $\delta(q_0, v_0)$ is defined in (\ref{delta est}) in terms of $\delta^0(q_0, v_0), \delta^2(q_0, v_0)$.
    These steps are recapped in Remark \ref{rem:one}.

\section{A Condition for Normal Gradient Vector Fields}\label{sec:condition}

As outlined in the introduction, manifold learning involves searching for an embedding $\phi:M\to \R^N$ with $y_i\in {\rm Im}(\phi)$ for training data $\{y_i\}.$  
Of course,  $y_i\in {\rm Im}(\phi)$ iff $y_i\in {\rm Im}(\phi\circ g)$, where $g\in {\rm Diff}(M)$ is a diffeomorphism of $M$.
 Thus the penalty term $P_1:\calE\to\R$
 which measures goodness of fit should
not distinguish between $\phi$ and $\phi\circ g$, {\it i.e.,} this 
penalty term   
 must be invariant under the action of   ${\rm Diff}(M)$: $P_1(\phi) = P_1(\phi\circ g)$.
The data penalty term $P_1(\phi)= \sum_{i=1}^r d^2(\phi(M),y_i)$ in the introduction is clearly 
diffeomorphism-invariant.
(Since the quotient space $\calE/{\rm Diff}(M)$ may have a non-Hausdorff topology, we consider diffeomorphism-invariant penalty functions on $\calE$, rather than penalty functions on the quotient space.)  
These types of invariant functionals are familiar in  gauge theory, where functionals
are invariant under gauge group actions, and in Gromov-Witten theory, where maps are defined only up to holomorphic automorphisms.

Similarly, we can replace the non-diffeomorphism invariant regularization term $\Vert\phi\Vert_s$, which is computed in a choice of local coordinates, by e.g. $P_2'(\phi) = {\rm vol}(\phi(M))$, which measures a combination of the first derivatives of $\phi = (\phi^1,\ldots,\phi^N)$, or by\\ 
 $
P_2'(\phi)=\int_M \left[\sum_{j=1}^N (({\rm Id} + \Delta)^s\phi^j)\cdot\phi^j\right]^{1/2} \dvol_M$, which is equivalent to the $s$-Sobolev norm by the basic elliptic estimate.  As a simple example, for $\calE = {\rm Emb}(S^2, \R^3)$, $P_1'(\phi) = d^2(\phi(S^2),\vec 0)$, $P_2' = {\rm vol}(\phi(S^2))$, and 
for the standard unit sphere as the initial embedding $\phi_0(S^2)$,
gradient flow for $P' = P'_1+P'_2$ shrinks the unit sphere to the origin in infinite time.

In this section, we prove that such penalty functions have  gradients that are pointwise normal vector fields to $M$, and apply this result to $\calE$.  
We first review a known result about the gradient function on a finite dimensional manifold with a group action.  Recall that for a $C^1$ function $P:Z\to \R$ on an oriented Riemannian manifold $(Z,h)$, the gradient vector field $\nabla P$ is characterized by
$$dP_m(v) = \langle \nabla P, v\rangle_{h(m)},$$
for all $m\in Z, v\in T_mZ.$ Here $dP_m:T_mZ \to \R$, 
the differential of $P$ at $m$,  is independent of the Riemannian metric.

\begin{lem} \label{lem:one} Let $G$ be a connected Lie group acting via isometries on a Riemannian manifold $Z$.  A function $P:Z\to \R$ is $G$-invariant ($P(g\cdot m) = P(m)$ for all $m\in Z, g\in G$) iff $\nabla P_m$ is perpendicular to the orbit $\mathcal O_m = \{g\cdot m:g\in G\}$ for all $m\in Z.$
\end{lem}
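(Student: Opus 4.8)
The plan is to reduce both implications to one standard observation: the tangent space $T_m\mathcal O_m$ to the orbit through $m$ is spanned by the values at $m$ of the fundamental vector fields of the action. Recall that for $X$ in the Lie algebra $\mathfrak g$ of $G$ the fundamental vector field $X^*$ is given by $X^*_p=\frac{d}{dt}\big|_{t=0}\exp(tX)\cdot p$, and that the orbit map $G\to N$, $g\mapsto g\cdot m$, factors through an injective immersion $G/G_m\hookrightarrow N$ with image $\mathcal O_m$ whose differential at the identity coset is $X\mapsto X^*_m$; hence $T_m\mathcal O_m=\{X^*_m : X\in\mathfrak g\}$. I will take this as given, together with the fact that for connected $G$ the set $\exp(\mathfrak g)$ generates $G$.

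For the forward implication, assume $P$ is $G$-invariant. Fix $m\in N$ and $X\in\mathfrak g$. The curve $t\mapsto\exp(tX)\cdot m$ lies in $\mathcal O_m$, and $P$ is constant along it by invariance; differentiating at $t=0$ gives $dP_m(X^*_m)=0$. By the defining property $dP_m(v)=\langle\nabla P_m,v\rangle_{h(m)}$ this says $\langle\nabla P_m,X^*_m\rangle_{h(m)}=0$, and letting $X$ range over $\mathfrak g$ we conclude $\nabla P_m\perp T_m\mathcal O_m$.

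For the reverse implication, assume $\nabla P_m\perp T_m\mathcal O_m$ for every $m$. Fix $m$ and $X\in\mathfrak g$ and set $\gamma(t)=\exp(tX)\cdot m$. A one-parameter-group computation gives $\gamma'(t)=X^*_{\gamma(t)}$, which is tangent to the orbit through $\gamma(t)$; since orbits partition $N$ this orbit is again $\mathcal O_m$, so $\gamma'(t)\in T_{\gamma(t)}\mathcal O_m$. Therefore $\frac{d}{dt}P(\gamma(t))=\langle\nabla P_{\gamma(t)},\gamma'(t)\rangle_{h(\gamma(t))}=0$, whence $P(\exp(tX)\cdot m)=P(m)$ for all $t\in\mathbb R$ and all $X\in\mathfrak g$. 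Since $G$ is connected, any $g\in G$ can be written as a finite product $g=\exp(X_1)\cdots\exp(X_j)$ with $X_i\in\mathfrak g$; applying the previous identity $j$ times at the successive base points $m,\ \exp(X_j)\cdot m,\ \dots$ yields $P(g\cdot m)=P(m)$, so $P$ is $G$-invariant.

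I expect no deep obstacle here; the one point requiring care is the bookkeeping in the reverse direction, where the perpendicularity hypothesis must be used at every point of the orbit (not just at $m$) and the passage from invariance under one-parameter subgroups to invariance under all of $G$ must invoke connectedness via the generation of $G$ by $\exp(\mathfrak g)$. One may also note that the isometry hypothesis on the $G$-action is not needed for this particular statement, since both directions follow purely from the gradient characterization; it enters later when transferring the conclusion to the metric setting on $\calE$.
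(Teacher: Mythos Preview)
Your proof is correct and follows essentially the same approach as the paper: differentiate $P$ along orbit curves for the forward direction, and for the converse show $P$ is constant along curves in the orbit by pairing $\nabla P$ with their tangent vectors. The only cosmetic difference is that the paper takes an arbitrary smooth path $\eta(t)$ in $G$ from $e$ to $g$ (using path-connectedness directly) rather than factoring $g$ as a product of exponentials; your observation that the isometry hypothesis is not actually used in the lemma is also correct.
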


Strictly speaking, we mean $\nabla P(m) \perp_{h(m)} T_m\mathcal O_m.$

\begin{proof}  If $P$ is $G$-invariant, then $\mathcal O_m$ is contained in a level set of $P$.  The gradient is always perpendicular to a level set:  for $X\in T_m\mathcal O$, take a curve $\gamma(t)\in \mathcal O_m$ with 
$\dot\gamma(0) = X$,  and compute
$$0 = (d/dt)|_{t=0} P(\gamma(t)) = dP_m(X) = \langle \nabla P_m, X\rangle.$$

\smallskip
Conversely, assume that $\nabla P_m\perp T\mathcal O_m$ for all $m$.  Take a smooth path $\eta(t), t\in [0,1],$ from $e\in G$ to a fixed $g\in G$, and for a fixed $m\in Z$ define $\gamma(t) = \eta(t)\cdot m.$ Then 
$$0 = \langle \nabla P_{\gamma(t)}, \dot\gamma(t) \rangle =  dP_{\gamma(t)}(\dot\gamma(t)),$$
so $P$ is constant along $\gamma(t).$  
In particular, $P(m) = P(\gamma(0)) = P(\gamma(1)) =P(g\cdot m)$.
\end{proof}
${}$\smallskip

We want to apply this result with $Z, G$ given by $\calE, \diffM$, respectively.  (Since $\diffM$ need not be connected, we have to restrict to $\diffzM$, the connected component of the identity diffeomorphism.) The smooth structure on mapping spaces is well known (see {\it e.g.,} \cite{Eells}).  Rather than go through the technicalities of the Lie group structure on $\diffM$ \cite{Omori}, we give a direct proof.

The tangent space $T_\phi\calE$ at an embedding $\phi$ is given by the infinitesimal variation of a family of embeddings $\phi(t)$, which for fixed $m\in M$ is given by $(d/dt)|_{t=0} \phi_t(m)\in T_{\phi(m)}\R^N\simeq \R^N.$ Thus elements $X$ of $T_\phi\calE$ are ``$\R^n$-valued vector fields along  $\phi(M)$," {\it i.e.,}  smooth functions
$X:M\to \R^N.$

For $\phi\in\calE$, $M$ has a Riemannian metric $g_\phi$ given by the $\phi$-pullback of the standard metric/dot product on $\R^N$ restricted to $\phi(M).$  Specifically, for $v,w\in T_mM$, 
$\langle v,w\rangle_m = d\phi(v)\cdot d\phi(w).$  Denote the associated volume form on $M$ by $\dvol_\phi.$
We take the $L^2$ inner product on $T_\phi\calE$ associated to 
the standard metric/dot product on $\R^N$ and $g_\phi$:
$$\langle X, Y\rangle_{\phi} = \int_M X_m\cdot Y_m\ \dvol_\phi(m).$$
Thus the gradient of $P:\calE\to \R$ is characterized by
$$ dP_\phi(X) = \langle \nabla P_\phi, X\rangle_\phi = 
 \int_{M}\nabla P_m \cdot X_m\ \dvol_\phi(m).$$
 
  $\diffM$ acts on $\phi\in\calE$ by $g\cdot \phi = \phi\circ g^{-1}.$ It is standard that $\diffM$ acts via isometries on $\calE$ with the $L^2$ metric.

In our setting, we can strengthen Lemma \ref{lem:one} to the  pointwise normal condition\\ 
$\nabla P_{\phi(m)}\cdot Q_{m} =0$ for all $Q_{m}\in T_{\phi(m)} \phi(M), m\in M$, for a $\diffM$-invariant $P$.

\begin{thm} \label{thm:one} For a $C^1$  function $P: \calE \to \mathbb{R}$, the gradient $\nabla P$ is pointwise normal to $T_{\phi(m)}\phi(M)$ for all $m\in M$ and for all 
$\phi\in \calE$ if and only if $P$ is invariant under diffeomorphisms in $\diffzM$, the path connected component of the identity in $\diffM.$
\end{thm}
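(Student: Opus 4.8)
The plan is to adapt the two implications in the proof of Lemma~\ref{lem:one} directly to the case $N=\calE$, $G=\diffzM$ (this avoids putting a Lie group structure on $\diffM$) and then to upgrade the resulting ``$L^2$-orthogonal to the orbit'' statement to the stronger pointwise normal statement by a localization argument. The first preliminary step is to identify the tangent space $T_\phi\mathcal O_\phi$ to the $\diffzM$-orbit through $\phi$. Given a smooth vector field $W$ on $M$, its flow $\psi_t$ lies in $\diffzM$ and is defined for all $t$ since $M$ is compact, with $\psi_0=\mathrm{id}$; thus $t\mapsto\psi_t\cdot\phi=\phi\circ\psi_{-t}$ is a curve in $\mathcal O_\phi$ through $\phi$ with velocity $-d\phi(W)$ at $t=0$. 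Since $\phi$ is an embedding, $d\phi_m\colon T_mM\to T_{\phi(m)}\phi(M)$ is an isomorphism for every $m$, so the collection of all $d\phi(W)$, as $W$ ranges over smooth vector fields on $M$, is exactly the space of smooth tangential vector fields along $\phi(M)$, and it sits inside $T_\phi\mathcal O_\phi$.

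For the implication ``$\diffzM$-invariant $\Rightarrow$ pointwise normal'': if $P(\phi\circ g^{-1})=P(\phi)$ for all $g\in\diffzM$, then $P$ is constant on $\mathcal O_\phi$, so differentiating $P$ along the curve $\phi\circ\psi_{-t}$ above yields $\int_M \nabla P_{\phi(m)}\cdot d\phi_m(W_m)\,\dvol_\phi(m)=0$ for every vector field $W$ on $M$. To pass from this integral identity to the pointwise claim that $\nabla P_{\phi(m)}\cdot Q=0$ for all $Q\in T_{\phi(m)}\phi(M)$, I would argue by contradiction: if the tangential component of $\nabla P_{\phi(m_0)}$ were a nonzero vector $Q_0$, choose $W$ with $d\phi_{m_0}(W_{m_0})=Q_0$, so that $\nabla P_{\phi(m_0)}\cdot d\phi_{m_0}(W_{m_0})=|Q_0|^2>0$; by continuity of $\nabla P_\phi$ this dot product stays positive on a neighborhood of $m_0$, and replacing $W$ by $\chi W$ for a nonnegative bump function $\chi$ supported in that neighborhood with $\chi(m_0)>0$ makes the integral strictly positive, a contradiction.

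For the implication ``pointwise normal $\Rightarrow$ $\diffzM$-invariant'': fix $g\in\diffzM$ and, using path-connectedness of $\diffzM$, a smooth path $g_t$ from $\mathrm{id}$ to $g$; set $\gamma(t)=\phi\circ g_t^{-1}\in\calE$. Since $\gamma(t)(M)=\phi(M)$ for every $t$, the velocity $\dot\gamma(t)$ is at each point a tangential vector to $\gamma(t)(M)$, so the integrand of $\langle\nabla P_{\gamma(t)},\dot\gamma(t)\rangle_{\gamma(t)}$ is a normal vector dotted with a tangential vector, hence $0$; therefore $\tfrac{d}{dt}P(\gamma(t))=dP_{\gamma(t)}(\dot\gamma(t))=0$, giving $P(g\cdot\phi)=P(\gamma(1))=P(\gamma(0))=P(\phi)$, and $P$ is $\diffzM$-invariant.

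I expect the only genuinely delicate point to be the localization step in the first implication: diffeomorphism invariance only forces $\nabla P_\phi$ to be $L^2$-orthogonal to the orbit, and turning this into a pointwise statement requires knowing that the orbit tangent directions already exhaust the smooth tangential fields $d\phi(W)$, together with the bump-function argument above; moreover some care is needed because $P$ is only $C^1$, so $\nabla P_\phi$ is a priori merely continuous, which is why one tests against a smooth $W$ selected first and uses only continuity of $\nabla P_\phi$. The remaining ingredients -- the $L^2$-isometry property of the $\diffM$-action, existence of flows of vector fields on compact $M$, and path-connectedness of $\diffzM$ -- are standard and already recorded in the text.
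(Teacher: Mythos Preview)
Your proposal is correct and follows essentially the same approach as the paper: identify $T_\phi\mathcal O_\phi$ with the tangential vector fields along $\phi(M)$, use $L^2$-orthogonality to the orbit, and then localize via bump functions to upgrade to pointwise normality, with the converse handled by differentiating $P$ along a path in the orbit. The only cosmetic difference is in the localization step---the paper passes to the limit through a delta-sequence of bump functions $f_k$ concentrating at $m_0$, whereas you argue by contradiction with a single bump function supported where the tangential component of $\nabla P_\phi$ is nonzero---but these are equivalent standard maneuvers.
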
 

We note that this pointwise perpendicularity is measured in the usual dot product on $\R^N$, even though we have implicitly been using 
$\phi$-pullback metrics on $M$. In particular, the theoretical use of the pullback metric does not affect the practical implementation of discretized gradient flow.

\begin{proof}
Assume $P$ is $\diffzM$-invariant.  As in the Lemma, we conclude that $\nabla P_\phi \perp_{L^2} 
T_\phi \mathcal O_\phi.$

Take a family of diffeomorphisms $g_t$ of $M$ with $g_0 = {\rm Id}$ and with tangent vector 
$X = (d/dt)|_{t=0} g_t\in T_{\rm Id} \diffM.$  Then $\phi\circ g_t\in \mathcal O_\phi$, 
and the vector field $(d/dt)|_{t=0} \phi\circ g_t = d\phi (X)$ tangent to $\phi(M)$ is in $T_\phi \mathcal O_\phi$.  Conversely, any tangent vector field $V$ to $\phi(M)$ integrates to a family of diffeomorphisms in $\diffzM$, so we conclude that 
$V\in T_\phi\mathcal O_\phi$ and that (up to a choice of topology on $\diffM$)  $T_\phi \mathcal O_\phi$ is the space of tangent vector fields to $\phi(M).$

Fix $m_0 \in M$ and a vector $Q_{m_0} \in T_{\phi(m_0)}\phi(M)$. 
Choose a sequence $\epsilon_k\to 0$ and  smooth functions $f_{k} : \phi(M)\to \mathbb{R}$ such that $\int_{M}f_{ k}\ \dvol_\phi =1$, ${\rm supp} (f_{ k}) \subset B_{\epsilon_k}(\phi(m_0))  \cap \phi(M)$, with 
$B_{\epsilon_k}(\phi(m_0))$ the Euclidean ball of radius $\epsilon_k$ centered at $\phi(m_0)$. Extend $Q_{m_0}$ to a vector field $Q = Q_m$ on $\phi(M)$, and define the vector fields $Y_{k} $ on $\phi(M)$ by:
 $$
 Y_{k}(\phi(m)) = f_{k}(\phi(m))\cdot Q_m.
 $$
Then we have 
\begin{eqnarray*}
0&=&\lim_{\epsilon_k\to 0}
 \langle \nabla P _{\phi}, Y_{\epsilon_k} \rangle =
 \lim_{\epsilon_k\to 0}
 \langle \nabla P _{\phi}, f_{k}\cdot Q \rangle =
 \lim_{\epsilon_k\to 0}
 \int_{M} \nabla P _{\phi}(\phi(m))\cdot f_{k}(\phi(m)) Q_m\ \dvol_\phi \\  
&=&\nabla P_ {\phi}(\phi(m_0)) \cdot Q_{m_0}
\end{eqnarray*}
Therefore $ \nabla P_ {\phi}(\phi(m_0)) \perp Q_{m_0}$, and so 
$\nabla P_\phi(\phi(m_0))\perp T_{\phi(m_0)}\phi(M)$.

For the converse, assume
 that $\nabla P_{\phi}(\phi(m)) \perp T_{\phi(m)}\phi(M)$ for all $m \in M$. 
 Then $\nabla P \perp_{L^2} Q$ for all tangent vector fields $Q$ to $\phi(M)$, and so $\nabla P$ is perpendicular to the orbit of $\diffzM.$  As in Lemma \ref{lem:one}, we conclude that $P$  is $\diffzM$-invariant.
 \end{proof}

\section{Estimates for Flows in Normal Gradient Directions} \label{sec:flows}

Under the assumption that our penalty function is diffeomorphism invariant, to implement discretized gradient flow, by Theorem \ref{thm:one}
we have to know how far $\phi(M)$ can move in a fixed normal gradient direction while remaining in the space of embeddings. The next set of results gives an explicit estimate for the lower bound $t^*$ of this flow, with the main result in Theorem \ref{thm:three}.

Throughout the paper, we assume that $M$ is compact. By Prop.~\ref{prop:1.1},   $\phi:M\to \R^N$ is an embedding iff it is an injective immersion. 
Recall that $\phi$ is an immersion if its differential $d\phi$ is pointwise injective, which is the infinitesimal condition for  the map $\phi$ to be a local injection. 
Thus, there are two types of obstructions to a linearly deformed embedding $\phi_t$ of $\phi$ remaining an 
embedding: 
(1) a local obstruction, where distinct nearby points in $\phi(M)$ deform to the same point in 
$\phi_t(M)$;
(2)  a global obstruction, where points far from each other in the induced Riemannian metric on $\phi(M)$ deform to the same point in $\phi_t(M)$ because they are close in $\R^N.$  The local obstruction is controlled by the injectivity of the differential. Specifically, in Theorem \ref{thm:three}, we conclude that $t^*$ is ultimately a function of $ K$ and $\delta$, 
where $K$ is
is a bound on the principal curvature of $\phi$ and thus controls the local obstruction.
The global obstruction, which cannot be treated by infinitesimal means, is controlled in Theorem \ref{thm:three} by $\delta$, which is constructed by bounds in the Implicit Function Theorem.

\subsection{Notation and Definitions}

\begin{enumerate}

\item{} 
$\epsilon = \epsilon_{\phi}$ is chosen so that each $s$ in 
the $\epsilon$-neighborhood  $B_\epsilon(\phi(M))$ of $\phi(M)$ has a unique closest point $q= q(s)$ in $\phi(M)$. The existence of this neighborhood is guaranteed by the 
$\epsilon$-Neighborhood Theorem \cite[Thm.~6.24]{Lee}.   $B_\epsilon(\phi(M))$ is diffeomorphic to a neighborhood of the zero section of the normal bundle $\nu = \np$ of $\phi(M)$:
we have $s-q \in\nu_q =  \nu_{\phi,q}$, the
 fiber of $\np$ at $q$, and the map
$s\mapsto s-q$ is the diffeomorphism. 
A  lower bound for $\epsilon$ is given in Lemma \ref{lem:four} in terms of $\delta$ in (8) below; it will become explicit in Remark \ref{rem:one}.

\item{}  We  use two sets of coordinates on $\mathbb{R}^N$. Standard (global) coordinates are denoted $(x^1, \ldots, x^N)$. We  also represent points  $s \in B_\epsilon(\phi(M))$
 as $$s=(q^1, \ldots, q^k, v^1, \ldots, v^{N-k}) = (q, v),$$
 where the $q^i$ are local manifold coordinates and $v^j$ are local coordinates for the normal space. These are called normal coordinates. Thus  $q \in \phi(M)$ has $q = (q^1, \ldots, q^k, 0, \ldots, 0)$. Here $k = {\rm dim}(M).$ Note that normal coordinates are not well defined outside $B_\epsilon(M).$

\item{} A vector in $\np$ can be expressed either as $tv_q$, where $v_q$ is a unit length vector
 at $q$,
 or as $v^iw_{i,q}$, where $\{w_{i,q} \}$ is an orthonormal basis of $\nu_{\phi,q}$. There are $N-k$ $\{w_{i,q} \}$ vectors, each with $N$ Euclidean coordinates.
 
\item{} 
The endpoint map $E: \np \rightarrow \mathbb{R}^N$ is  $E(q,v) =q+v$. 
 It is given explicitly by:
$$
 	E( q^1, \ldots, q^k, v^1, \ldots, v^{N-k}) = (x^1(q)+ v^iw_{i,q}^1, \ldots, x^N(q)+ v^iw_{i,q}^{N}),
$$   
 where the domain is in normal coordinates and the range is in standard coordinates. 

 \begin{defn} \label{fp} \cite[\S6]{Mil} $e =q_e+v_e\in B_\epsilon(\phi(M))$ is a focal point  
 if the Jacobian of the $E$ map is not full rank at $(q_e,v_e)$.
 \end{defn}

\item{} The inclusion map $\phi(M) \rightarrow \mathbb{R}^N$ is  $ q = (q^1, \cdots, q^k) \mapsto (x^1(q),\cdots, x^N(q)) = x( q)$ in manifold to Euclidean coordinates, so 
 the first fundamental form is the matrix $(g_{ij}) = \big(\frac{\partial x}{\partial q^i} \cdot \frac{\partial x}{\partial q^j}\big) $, where $\cdot$ is the Euclidean dot product. The second fundamental form at the normal vector 
 $v\in \np$ is the matrix  ${\rm II}_{v} = \left(v \cdot 
 \frac{\partial^2  x}{\partial q^i\partial q^j}\right)$.

\item{} At a fixed  $ q\in\phi(M) $, we may choose manifold coordinates so that
 the first fundamental form is the identity matrix.  The principal curvatures of $ v$ at $ q$ are by definition  the eigenvalues $p_1, \ldots, p_k$ of ${\rm II}_{ v}$.  Here $p_i = p_i(q,  v).$

\item{} Let $K$ be the maximal
principal eigenvalue of $\phi(M)$.  Thus we take the maximum of the $p_i(v)$ over all unit vectors 
in $\np.$ 
\item{} \label{deltadef} $\delta$ is chosen such that normal lines of length $\epsilon$ 
based at different, close points of $\phi(M)$ do not intersect: for $d_{\mathbb{R}^N}(\phi(m_1), \phi(m_2)) < \delta$, 
$\phi(m_1) + t_1v_1 \neq \phi(m_2) + t_2v_2$ for unit normal vectors $v_i \in \nu_{\phi(m_i)}$,
$i = 1,2$, and $|t_1|, |t_2| < \epsilon,$  with $\epsilon$ defined in (1) above.
$\delta$ is precisely defined in (\ref{delta}), and estimated in Remark \ref{rem:one}. ($\delta$ is 
 the {\em reach} of 
$\phi(M)$, as  in {\it e.g.} \cite{fefferman2016}.)

\end{enumerate}

\begin{rem}\label{rem:zero}  {\rm
In the calculations below, estimates for $\epsilon, \delta, K$ are computed explicitly in terms of $\phi$, local coordinates on $M$, and local coordinates on $\nu_\phi$.  Specifically, a lower bound for $\epsilon$ 
in terms of $K$ and $\delta$ is given in Lemma 4.  $K$ of course depends on $\phi$, but is in fact independent of coordinates on $M$, as it is the maximum eigenvalue of any normal component of the trace of the second fundamental form. 
   The estimate of $\delta$ uses $\phi$,  local coordinates on $M$, and local coordinates on $\np$
in {\it e.g.}, the proof of Proposition \ref{prop:three}. It is reasonable to assume knowledge of coordinates on $M$, as a manifold is specified by a cover of charts.   In fact, local coordinates on $M$ and $\phi$ determine local coordinates on $\np.$\footnote{Take the standard basis $\{e_i\}$ of $\R^N$. For  
$I = (i_1,\cdots ,i_{N-k})$ with $1\leq i_1< \cdots < i_{N-k}\leq N,$ lexicographically ordered, set
 $e_I = (e_{i_1},\ldots, e_{i_{N-k}})$  
Let $U_I$ be the open set of $q\in \phi(M)$ such that $I$ is the smallest  multi-index such that the projection of 
$e_I$ into $\nu_{\phi,q}$ is a basis of $\nu_{\phi,q}$. Then $\np$ is trivial over $U_I$, and we can form a new, 
fixed cover of $M$ by taking $\{V_i\cap U_I\}.$  In particular, the local coordinates on $\np$ in (2) are not extra 
data, since the embedding $\phi$ determines which $q$ are in which $U_I$.}  Thus, in the end our estimates depend only on local coordinates on $M$ and on $\phi.$ 
See Remark \ref{rem:one} for more details.  }
\end{rem}

\subsection{Calculating the Flow Length to Remain an Embedding}

In this section, we compute $t^*$ such that for $t < t^*$ and $u$ a normal vector field along $\phi(M)$
with $|u_{\phi(m)}| \leq 1$,  the deformed manifold $\phi_t(M) = \{ \phi(m) + tu_{\phi(m)}:m\in M\}$ is an embedding. As above, it suffices to prove that each $\phi_t$ is an immersion.

We start by determining which normal deformations  $\phi_t(M)$ of $\phi(M)$ are still immersions. 

\begin{prop}\label{prop:two}
Let $u$ be a normal vector field of length at most one along $\phi(M) \subset \mathbb{R} ^{N}$, and let $\epsilon$ be defined in \S5.1(1). Then $\phi_t(M) = \{{\phi(m) + tu_{\phi(m)} : m \in M} \}$ is immersed in $\mathbb{R}^{N}$ for $|t| < \epsilon$.
\end{prop}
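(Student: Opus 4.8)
The plan is to show that for $|t| < \epsilon$, the map $m \mapsto \phi(m) + t u_{\phi(m)}$ has pointwise injective differential, which (together with compactness, invoked later in Theorem~\ref{thm:three}) is what we need here. The key observation is that this deformed map can be recognized as a composition involving the endpoint map $E$ restricted to the image of a section of $\nu_\phi$, and that on the $\epsilon$-neighborhood $B_\epsilon(\phi(M))$ the endpoint map $E$ is a diffeomorphism onto its image by \S5.1(1). Concretely, since $|t u_{\phi(m)}| \le |t| < \epsilon$, the assignment $\sigma_t : q \mapsto (q, t u_q)$ defines a smooth section of $\nu_\phi$ landing inside the neighborhood of the zero section that $E$ identifies diffeomorphically with $B_\epsilon(\phi(M))$. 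Then $\phi_t = E \circ \sigma_t \circ \phi$ (identifying $M$ with $\phi(M)$ via the embedding $\phi$), and $\phi$ is already an immersion by hypothesis.

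First I would record that $\sigma_t : \phi(M) \to \nu_\phi$ is a smooth \emph{embedding} of $\phi(M)$ into the total space of the normal bundle: it is a section of a vector bundle, hence automatically an embedding onto a submanifold transverse to the fibers, and its image lies in the open set where $E$ is a diffeomorphism. Composing an embedding with a diffeomorphism $E$ (restricted to that open set) yields an embedding $E \circ \sigma_t : \phi(M) \to \R^N$, in particular an immersion. Since $\phi : M \to \phi(M)$ is an immersion (indeed a diffeomorphism onto its image) and the composition of immersions is an immersion, $\phi_t = (E \circ \sigma_t) \circ \phi$ is an immersion of $M$ into $\R^N$ for every $|t| < \epsilon$. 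I would spell out the differential computation only to the extent of noting $d(\phi_t)_m = dE_{(q, t u_q)} \circ d(\sigma_t)_q \circ d\phi_m$, each factor injective, so the composite is injective.

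The one point requiring a little care — and the main obstacle — is matching up the two descriptions of $\phi_t(M)$: the set $\{\phi(m) + t u_{\phi(m)} : m \in M\}$ in the statement versus the image of $E \circ \sigma_t$. By the explicit formula for $E$ in \S5.1(4), $E(q, w) = q + w$, so $E(\sigma_t(q)) = E(q, t u_q) = q + t u_q$, which is exactly $\phi(m) + t u_{\phi(m)}$ when $q = \phi(m)$; this identification is immediate once one writes the normal vector $t u_q \in \nu_{\phi, q}$ in the coordinates of \S5.1(3). The only thing to verify is that $\sigma_t$ really does land in the domain on which $E$ is known to be a diffeomorphism: the $\epsilon$-Neighborhood Theorem gives that $E$ is a diffeomorphism from $\{(q, w) \in \nu_\phi : |w| < \epsilon\}$ onto $B_\epsilon(\phi(M))$, and $|t u_q| = |t|\,|u_q| \le |t| < \epsilon$ since $|u_q| \le 1$, so $\sigma_t(q)$ lies in this domain for all $q$. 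This is precisely where the hypotheses $|u| \le 1$ and $|t| < \epsilon$ are both used, and it is the crux of the argument.
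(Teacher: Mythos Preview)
Your argument is correct and is essentially the paper's proof repackaged in coordinate-free language: the paper writes $F_t$ in normal coordinates as $(q^1,\ldots,q^k)\mapsto(q^1,\ldots,q^k,tu^1_q,\ldots,tu^{N-k}_q)$ and reads off the $\mathrm{Id}_{k\times k}$ block in the Jacobian, which is exactly your observation that a section $\sigma_t$ of $\nu_\phi$ is an embedding and that passing to normal coordinates is composing with the diffeomorphism $E^{-1}$. Both proofs use $|t|<\epsilon$ only to guarantee that normal coordinates (equivalently, the diffeomorphism $E$) are available on the image of $\sigma_t$; the paper states this in one line at the end, you make it the explicit crux.
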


\begin{proof}
Because $\phi:M\to \R^N$ is an embedding,  it suffices to show that the map $F_t:\phi(M) \rightarrow \phi_t(M)$,  $F_t(q) = q +tu_q$, is an immersion. 
In normal coordinates, we have
$$
F_t(q^1, \ldots, q^k) = (q^1, \ldots, q^k, tu^1_q, \ldots, tu^{N-k}_q).
$$
The differential $DF_t$, written as an $N\times k$ matrix, is of the form
$$DF_t = \left(\begin{array}{c}\\ {\rm Id}_{k\times k}\\ \\ \hline\\
{}^\star\end{array}\right),$$
where 
$\star$ is some $(N-k)\times k$ matrix.
This has rank $k$, so $F_t$ is an immersion. We note $\epsilon$ is implicitly used as normal coordinates are only defined in $ B_\epsilon(\phi(M)).$ 
\end{proof}

 Thus $\phi_t$ is an embedding if it is injective.
 Theorem \ref{thm:two} proves injectivity for $|t|\leq t^*$, where $t^*$ is defined in the Theorem statement.
The proof of Theorem \ref{thm:two} follows after the proofs of Lemmas \ref{lem:two}-\ref{lem:four} and Proposition \ref{prop:three}.

\begin{thm}\label{thm:two}
Let $u$ be a  normal vector field of length at most one along $\phi(M) \subset \mathbb{R} ^{N}$ Let $t^* = \min\{K^{-1}, \delta/3\}$. Then $\phi_t: M \rightarrow \mathbb{R}^N$ given by $m \mapsto \phi(m) + t 
u_{\phi(m)}$ is injective for $|t| \leq t^*$.
\end{thm}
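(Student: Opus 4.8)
The plan is to split injectivity into a ``local'' part (points that are close on $\phi(M)$) and a ``global'' part (points that are far apart on $\phi(M)$ but might collide in $\R^N$), using the two scales $K^{-1}$ and $\delta$ that appear in the definition of $t^*$. Suppose $\phi_t(m_1) = \phi_t(m_2)$, i.e. $\phi(m_1) + t u_{\phi(m_1)} = \phi(m_2) + t u_{\phi(m_2)}$ with $|t| \le t^*$, and assume $m_1 \ne m_2$; we derive a contradiction. Write $q_i = \phi(m_i)$ and $v_i = u_{q_i}$, so $q_1 + t v_1 = q_2 + t v_2 =: e$, a common point lying at normal-distance $\le |t| \le \epsilon$ from both $q_1$ and $q_2$ (here we use $t^* \le \delta/3 \le \epsilon$, invoking the lower bound for $\epsilon$ in terms of $\delta$ from Lemma~\ref{lem:four}). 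Since $e$ lies in $B_\epsilon(\phi(M))$, it has a \emph{unique} closest point in $\phi(M)$ by \S5.1(1).

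First I would handle the global case: if $d_{\R^N}(q_1, q_2) \ge \delta$, I would argue this is impossible for small $t$ because $|e - q_i| \le |t| \le \delta/3$ forces $d_{\R^N}(q_1,q_2) \le |e-q_1| + |e - q_2| \le 2\delta/3 < \delta$ by the triangle inequality — immediate contradiction. (Indeed one does not even need the reach property here; the crude bound suffices, which is presumably why $\delta/3$ rather than $\delta$ appears.) So the real content is the local case $d_{\R^N}(q_1, q_2) < \delta$. Here the defining property of $\delta$ in \S5.1(\ref{deltadef}) says precisely that normal segments of length $< \epsilon$ based at points of $\phi(M)$ within Euclidean distance $\delta$ cannot intersect unless they emanate from the same base point; since $q_1 + t v_1 = q_2 + t v_2 = e$ with $|t| < \epsilon$ and $d_{\R^N}(q_1,q_2) < \delta$, this forces $q_1 = q_2$, i.e. $\phi(m_1) = \phi(m_2)$, hence $m_1 = m_2$ since $\phi$ is an embedding — contradiction.

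The one remaining point — and where I expect the genuine work to be, rather than in the triangle-inequality bookkeeping — is that the statement as phrased only references $K^{-1}$ and $\delta$, yet the argument above used $t^* \le \epsilon$; so I must confirm that $\min\{K^{-1}, \delta/3\} \le \epsilon$. This is exactly the role of the bound in Lemma~\ref{lem:four}, which gives a lower bound for $\epsilon$ in terms of $K$ and $\delta$ (made fully explicit in Remark~\ref{rem:one}); I would cite it to justify that both $K^{-1}$ and $\delta/3$ lie below $\epsilon$, so that normal coordinates and the unique-closest-point property are available throughout. If instead the intended proof keeps the colliding point $e$ only at normal distance bounded by $|t| \le \delta/3$ and uses the reach $\delta$ directly to conclude uniqueness of the base point, then the dependence on $K^{-1}$ in $t^*$ must enter through Lemma~\ref{lem:four}'s relation $\epsilon = \epsilon(\delta, K)$ — either way, the substantive step is marshalling the earlier lemmas so that ``$|t| \le t^*$'' actually puts us inside the tubular neighborhood where the reach estimate applies. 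The injectivity itself is then a two-line consequence of the definition of $\delta$ plus the triangle inequality.
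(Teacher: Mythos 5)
Your proof is correct, and it is structurally more direct than the paper's. The paper's proof of Theorem~\ref{thm:two} introduces the auxiliary map $H_t(b)=b+tu_{q(b)}$ on a tubular neighborhood, uses the inverse function theorem to get a local-diffeomorphism radius $\delta_{H_t}$ (Lemma~\ref{lem:two}), derives a $u$-dependent quantity $\delta_H=\min_t\delta_{H_t}$, proves injectivity of $H_t|_{\phi(M)}$ for $|t|<\min\{\epsilon,\delta_H/3\}$ via the same triangle-inequality contradiction you give (Lemma~\ref{lem:three}), and only afterwards removes the $\delta_H$ dependence by showing $\delta\le\delta_H$ (the paragraph after (\ref{tstar})). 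You bypass the entire $H_t$/$\delta_H$ apparatus: your global case ($d(q_1,q_2)\ge\delta$) is killed by the triangle inequality exactly as in Lemma~\ref{lem:three} but with $\delta$ in place of $\delta_{H_t}$, and your local case ($d(q_1,q_2)<\delta$) is killed directly by the defining property of $\delta$ in \S5.1(\ref{deltadef}), which is precisely what the paper uses $\delta\le\delta_H$ to access. You also correctly identify that Lemma~\ref{lem:four}'s identity $\epsilon=\min\{K^{-1},\delta/3\}$ is what lets the bound be expressed purely in terms of $K$ and $\delta$ without separately tracking $\epsilon$.

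What your route buys: you avoid introducing the $u$-dependent intermediate $\delta_H$ and the two lemmas needed to define and then eliminate it, so the injectivity is visibly a two-case consequence of $\delta$'s definition plus the triangle inequality. What the paper's route buys is less clear; the $H_t$ construction does exhibit explicitly that the normal deformation extends to a local diffeomorphism of a tubular neighborhood, which may be of independent use, but for the injectivity statement itself it is a detour. One small presentational gap in your write-up: \S5.1(\ref{deltadef}) is phrased for \emph{unit} normal vectors with scalars $|t_i|<\epsilon$, whereas you have $u_{q_i}$ of length at most one; you should make explicit the (easy) rescaling $tu_{q_i}=(t|u_{q_i}|)\cdot(u_{q_i}/|u_{q_i}|)$ with $|t||u_{q_i}|\le|t|<\epsilon$ (and the trivial case $u_{q_i}=0$) so that the cited property applies verbatim. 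You inherit the same $<$ vs.\ $\le$ looseness at $|t|=t^*$ that the paper itself has, so that is not a point against you.
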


Here $\delta$ is given by \S5.1(8), and will be estimated explicitly after the proof of Proposition \ref{prop:three}.
\medskip

\noindent {\it Proof.}
 As in the previous proof, it suffices to show that $F_t:\phi(M) \to \phi_t(M)$  is injective.
We extend $F_t$ to a 
 map between open subsets of $\R^N$ by setting
$$H_t: B_{\epsilon -t}(\phi(M)) \to B_{\epsilon}(\phi(M)), \ \  H_t(b) = b + t u_{q(b)},$$
where $q(b)$ is the closest point in $\phi(M)$ to $b$.
Note that $H_t|_{\phi(M)} =F_t$ and that $H_t$ is defined only for $|t| <\epsilon.$

We now proceed with a series of Lemmas.
\begin{lem}\label{lem:two}  For each $q\in \phi(M)$, 
there exists a ball $B_{\delta^{q}_{H_t}}(q)$ of radius $\delta_{H_t}^{q}$ around $q$ on which $H_t$ is a diffeomorphism. 
 \end{lem}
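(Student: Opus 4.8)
The plan is to prove this purely locally, via the inverse function theorem applied to $H_t$ at the point $q$; the only substantive input is that $DH_t$ is everywhere nonsingular, which becomes transparent in the normal coordinates of \S5.1(2). First I would note that $H_t$ is a smooth map on $B_{\epsilon-t}(\phi(M))$: the nearest-point projection $b\mapsto q(b)$ is smooth there by the $\epsilon$-Neighborhood Theorem recalled in \S5.1(1), and $u$ is a smooth normal vector field, so $H_t(b)=b+tu_{q(b)}$ is smooth. The restriction of the domain to $B_{\epsilon-t}(\phi(M))$ is exactly what is needed so that the image lands where the normal coordinates are defined: if $b$ has normal coordinates $(q,v)$ then $q(b)=(q,0)$, both $v$ and $u_q$ lie in the fiber $\nu_{\phi,q}$, and $|v+tu_q|\le |v|+t|u_q| < (\epsilon-t)+t = \epsilon$, so $H_t(b)\in B_\epsilon(\phi(M))$.

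Next I would carry out the computation in normal coordinates. Since $q(b)=(q,0)$ and $v+tu_q\in\nu_{\phi,q}$, one gets directly
\[
H_t(q^1,\ldots,q^k,v^1,\ldots,v^{N-k}) = (q^1,\ldots,q^k,\, v^1+tu^1_q,\ldots,v^{N-k}+tu^{N-k}_q),
\]
so the Jacobian $DH_t$ is the block lower-triangular matrix $\begin{pmatrix} I_k & 0 \\ t\,(\partial u^i_q/\partial q^j) & I_{N-k}\end{pmatrix}$, whose determinant is identically $1$. Hence $DH_t$ is nonsingular at every point of $B_{\epsilon-t}(\phi(M))$, in particular at $q$. (This is the same triangularity phenomenon already used for $DF_t$ in the proof of Proposition~\ref{prop:two}, now for the thickened map.)

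Finally, the inverse function theorem yields an open neighborhood $U\ni q$ on which $H_t$ restricts to a diffeomorphism onto its image; choosing $\delta^q_{H_t}>0$ with $\delta^q_{H_t}<\epsilon-t$ and $B_{\delta^q_{H_t}}(q)\subset U$ (note $B_{\delta^q_{H_t}}(q)\subset B_{\epsilon-t}(\phi(M))$ automatically, since $d(p,\phi(M))\le d(p,q)$) completes the proof. I do not expect a genuine obstacle here: the only care needed is the bookkeeping that keeps $H_t$ well-defined with values inside the normal-coordinate chart (the reason for the $\epsilon-t$ in its domain), and the observation that the Jacobian computation, although performed in normal coordinates, is intrinsic because those coordinates themselves realize a diffeomorphism of $B_\epsilon(\phi(M))$ with a neighborhood of the zero section of $\nu_\phi$. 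No curvature or reach estimates enter at this stage; the quantities $K$ and $\delta$ are only used later, in Theorem~\ref{thm:two}, to assemble these local diffeomorphisms into a single globally injective map.
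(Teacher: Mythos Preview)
Your proposal is correct and follows essentially the same approach as the paper: write $H_t$ in normal coordinates, observe that the Jacobian is block lower-triangular with identity blocks on the diagonal (hence invertible), and invoke the inverse function theorem. The paper computes $DH_t$ only at points $q=(q,0)\in\phi(M)$, whereas you note (correctly) that the same block structure persists at every point of $B_{\epsilon-t}(\phi(M))$; you also spell out the smoothness of $H_t$ and the bookkeeping ensuring $H_t$ lands in $B_\epsilon(\phi(M))$, details the paper leaves implicit.
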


 \begin{proof}
In normal coordinates, we have
 $$
H_t(b) = H_t (q^1, \ldots, q^k, v^1, \ldots, v^{N-k}) = (q^1, \ldots, q^k, v^1 + tu^1_{q(b)}, \ldots,v^{N-k} + tu^{N-k}_{q(b)}).
 $$
 For $q = (q,0) \in \phi(M)$, the differential of the $H_t$ map has the matrix
 $$DH_t(q) = 
 \left( \begin{array} {c|c}{\rm Id}_{k\times k}& \frac{\partial q^i}{\partial v^j}\\
 &\\
 \hline\\
 \frac{\partial (v^i+ tu^i_q) }{\partial q^j}& \frac{\partial (v^i+ tu^i_q)}{\partial v^{j}} 
  \end{array}\right)
 = \left( \begin{array} {c|c}{\rm Id}_{k\times k}& 0\\
 &\\
 \hline\\
 \frac{\partial (v^i+ tu^i_q) }{\partial q^j}& {\rm Id}_{(N-k)\times (N-k)}\end{array}\right).
$$

  \noindent 
   This matrix is invertible, so the Lemma follows from the inverse function theorem. 
   \end{proof}

   Let $\delta_{H_t} =\min_{q}\{ \delta_{H_t}^{q}\}$.
 Set
  \begin{equation}\label{H} \delta_{H} = \min\{\delta_{H_t}: |t| \leq .999\epsilon\}.
  \end{equation}
 From the proof of the inverse function theorem, we can choose $\delta^q_{H_t}>0$ to be continuous in $t$. We need $|t|< \epsilon$, and then  the further restriction $|t| \leq .999\epsilon$ ensures that $t$ lies in a compact subset of $\R$.  Thus  $\delta_{H_t}$ and  $\delta_H$ are positive. 
  Note that $\delta_H = \delta_H(u)$ depends on the choice of the normal vector field $u$.

  \begin{lem} \label{lem:three}    $H_t|_{\phi(M)}$ is injective for $|t| < t^* 
  \stackrel{\rm def}{=} \min\left\{\epsilon, \delta_{H}/3\right \}$.
 \end{lem}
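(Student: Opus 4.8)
The plan is to show $H_t|_{\phi(M)} = F_t$ is injective by a covering-and-separation argument that splits into a ``local'' case (points that are close in $\phi(M)$) and a ``global'' case (points that are far apart), using the two constants $\delta_H$ and $\delta$ respectively, together with the fact that $F_t$ moves points by at most $t$ since $|u_q|\le 1$.

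\medskip

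Suppose $F_t(q_1) = F_t(q_2)$ with $q_1 \ne q_2$ in $\phi(M)$, i.e. $q_1 + tu_{q_1} = q_2 + tu_{q_2}$. First I would record the basic distance estimate: $d_{\R^N}(q_1,q_2) = |t|\,|u_{q_1} - u_{q_2}| \le 2|t| < 2t^*$. Now consider two cases according to the size of $d_{\R^N}(q_1, q_2)$.

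\medskip

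\textbf{Global case: $d_{\R^N}(q_1,q_2) \ge \delta_H/3$ (equivalently, $q_1, q_2$ are not in a common small chart).} Here I want a contradiction with $\delta$ from \S5.1(8). Observe that $q_1 + tu_{q_1} = q_2 + tu_{q_2}$ exhibits a point lying on the normal line of length $|t|<\epsilon$ at $q_1$ (in the direction of the unit vector $u_{q_1}/|u_{q_1}|$, or handling $u_{q_1}=0$ trivially) and simultaneously on the normal line of length $|t|<\epsilon$ at $q_2$. By the defining property of $\delta$, such a coincidence forces $d_{\R^N}(q_1,q_2) < \delta$ to fail, i.e. it forces $d_{\R^N}(q_1, q_2) \ge \delta$; but wait --- that is the wrong direction, so instead I would argue contrapositively: the definition of $\delta$ says that \emph{if} $d_{\R^N}(q_1,q_2) < \delta$ then the normal segments cannot meet, hence if the normal segments do meet we must have $d_{\R^N}(q_1,q_2) \ge \delta$. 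Combining with $d_{\R^N}(q_1,q_2) \le 2|t| \le 2t^*$ and $t^* \le \delta/3$ (which is built into the final value of $t^*$ in Theorem~\ref{thm:two}, though at this stage of Lemma~\ref{lem:three} we only have $t^* = \min\{\epsilon,\delta_H/3\}$) gives $\delta \le 2\delta/3$, a contradiction. I expect the bookkeeping here to require care about exactly which $t^*$ is in force at Lemma~\ref{lem:three} versus Theorem~\ref{thm:two}; in Lemma~\ref{lem:three} the separation of far-apart points should instead be extracted from $\delta_H$ via the local diffeomorphism balls, and the $\delta/3$ bound enters only in the later theorem. So in the Lemma I would phrase the dichotomy purely in terms of $\delta_H$: points within $\delta_H/3$ of each other in $\R^N$ both lie in one ball $B_{\delta_{H_t}^q}(q)$, and points farther apart are separated because each moves less than $t^* \le \delta_H/3 \le \delta_H/2$, so their images stay more than $\delta_H/3$ apart.

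\medskip

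\textbf{Local case: $d_{\R^N}(q_1,q_2) < \delta_H/3$.} Then there is a point $q\in\phi(M)$ (e.g. $q = q_1$) with both $q_1,q_2 \in B_{\delta_{H_t}^q}(q) \subseteq B_{\delta_{H_t}}(q)$, the ball on which $H_t$ is a diffeomorphism by Lemma~\ref{lem:two}. A diffeomorphism is injective, so $H_t(q_1) = H_t(q_2)$ forces $q_1 = q_2$. One needs $B_{\delta_{H_t}^{q_1}}(q_1)$ to actually contain $q_2$: since $d_{\R^N}(q_1,q_2) < \delta_H/3 \le \delta_{H_t}^{q_1}$ this holds (after possibly shrinking the radius in Lemma~\ref{lem:two} so that $\delta_H \le \delta_{H_t}^q$ for every $q$, which is how $\delta_H$ was defined). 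This disposes of the local case and, together with the global case, proves injectivity of $F_t = H_t|_{\phi(M)}$ for $|t| < t^* = \min\{\epsilon, \delta_H/3\}$.

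\medskip

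The main obstacle I anticipate is the interplay between the two radii $\delta_H$ (a property of the chosen vector field $u$ and the map $H_t$) and $\delta$ (a purely geometric ``reach'' of $\phi(M)$): the clean threefold-shrinking trick ($d_{\R^N} \le 2t^* \le 2\delta/3 < \delta$) belongs to Theorem~\ref{thm:two}, whereas Lemma~\ref{lem:three} must stand on $\delta_H$ alone. The right organizing principle is that $2|t| < \delta_H$ guarantees any two collision points lie in a common diffeomorphism ball, so injectivity is immediate from Lemma~\ref{lem:two}; then Theorem~\ref{thm:two} upgrades this to a bound independent of $u$ by showing $\delta_H \ge \min\{\epsilon,\delta\}$ up to constants, which is presumably the content of the intervening Lemma~\ref{lem:four} and Proposition~\ref{prop:three}. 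I would therefore keep the Lemma's proof short and self-contained: split on $d_{\R^N}(q_1,q_2) \lessgtr \delta_H/3$, use the diffeomorphism ball in the small case, and use ``each point moves by $<|t| \le \delta_H/3$'' to rule out the large case.
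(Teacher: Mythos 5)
Your overall strategy matches the paper's: use Lemma~\ref{lem:two} to rule out collisions between nearby points, and use the triangle inequality together with $|u_q|\le 1$ to rule out collisions between distant points. The paper packages this as a single proof by contradiction rather than a case split --- assuming $x+tu_x = y+tu_y$, Lemma~\ref{lem:two} forces $d_{\R^N}(x,y) \geq \delta_{H_t}$, while the triangle inequality gives $d_{\R^N}(x,y) \leq 2|t| < 2t^* \leq 2\delta_{H_t}/3$, a contradiction --- but the ingredients are identical. You also correctly identified that $\delta$ plays no role in this Lemma and belongs to Theorem~\ref{thm:two}.

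However, your global case has a quantitative error. You split at the threshold $\delta_H/3$ and claim that if $d_{\R^N}(q_1,q_2) \geq \delta_H/3$ and each point moves less than $t^* \leq \delta_H/3$, then the images stay more than $\delta_H/3$ apart. This does not follow. The triangle inequality gives $d_{\R^N}(F_t(q_1), F_t(q_2)) \geq d_{\R^N}(q_1,q_2) - |tu_{q_1}| - |tu_{q_2}| > \delta_H/3 - 2\delta_H/3$, which is negative and hence vacuous: two points at distance exactly $\delta_H/3$, each moving nearly $\delta_H/3$ toward each other, can collide. The threshold must be at least $2\delta_H/3$; the natural choice, matching the paper, is $\delta_{H_t}$ (so the local case uses $d < \delta_{H_t} = \min_q \delta_{H_t}^q$, guaranteeing both points lie in $B_{\delta_{H_t}^{q_1}}(q_1)$, while the global case uses $d \geq \delta_{H_t}$ together with total movement $< 2t^* \leq 2\delta_{H_t}/3 < \delta_{H_t}$). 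Your local case is fine since $\delta_H/3 < \delta_{H_t}^{q_1}$, but that slack is precisely what you need to reserve for the global case, and your choice of threshold spends it in the wrong place.
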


 \begin{proof}
 Assume instead that there exist $x, y \in \phi(M)$ such that $x + t u_x = y + t u_y$ for $|t| < t^*$. By Lemma \ref{lem:two}, $d_{\mathbb{R}^N}(x,y)> \delta_{H_t}$.
Then
  \begin{eqnarray*}
    \delta_{H_t} &<& d_{\mathbb{R}^N}(x,y) =|x-y|
    = |x - (x + t u_x) + (x + t  u_x) -y| \\
     &\leq& |x - (x +  t  u_x)| +  |(y +  t  u_y) -y| 
     = | t  u_x| + | t  u_y| 
     \leq 2|t| < 2t^* \\
     &\leq& 2\delta_{H_t}/3, 
     \end{eqnarray*}
since $t^* < \delta_{H_t}/3.$  This is a contradiction. 
    \end{proof}
  
We now compute $\epsilon$ in \S5.1(1) in terms of $K$ in \S5.1(7) and $\delta$ in \S5.1(8).  As mentioned above, $K$ is computed locally on $\phi(M)$, while $\delta$ is computed globally using the Euclidean distance.

\begin{lem}\label{lem:four} Set $\epsilon = \min \left\{K^{-1}, \delta/3 \right\}$, where $K$ is given in \S5.1(7) and $\delta$ is given in \S5.1(8).  Then every point in $B_\epsilon(\phi(M))$ has a unique closest point in $\phi(M).$
\end{lem}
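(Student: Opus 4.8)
The plan is to recast the statement in terms of the endpoint map $E:\nu_\phi\to\R^N$, $E(q,v)=q+v$ from \S5.1(4), and to show that its restriction to the open $\epsilon$-disk bundle $D_\epsilon=\{(q,v)\in\nu_\phi : |v|<\epsilon\}$ is a bijection onto $B_\epsilon(\phi(M))$; the uniqueness of nearest points then drops out. First I would record two standard facts. Since $M$ is compact, $\phi(M)$ is a compact, hence closed, subset of $\R^N$, so every $s\in\R^N$ has at least one nearest point $q\in\phi(M)$; and at such a $q$ the first-variation condition for $p\mapsto|s-p|^2$ on $\phi(M)$ forces $s-q\perp T_q\phi(M)$, i.e. $s-q\in\nu_{\phi,q}$. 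Consequently, if $s\in B_\epsilon(\phi(M))$ then $r:=d_{\R^N}(s,\phi(M))<\epsilon$ and $s=E(q,s-q)$ with $|s-q|=r<\epsilon$, so $E(D_\epsilon)\supseteq B_\epsilon(\phi(M))$ (the reverse inclusion is trivial since $|E(q,v)-q|=|v|$).

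Next I would prove injectivity of $E|_{D_\epsilon}$, which is where \S5.1(8) enters. Suppose $E(q_1,v_1)=E(q_2,v_2)=:s$ with $|v_i|<\epsilon$. Then $|q_1-q_2|=|v_2-v_1|\le|v_1|+|v_2|<2\epsilon\le 2\delta/3<\delta$, so the basepoints lie within Euclidean distance $\delta$; writing $v_i=t_i\hat v_i$ with $\hat v_i$ a unit normal and $0\le t_i=|v_i|<\epsilon$, the identity $q_1+v_1=q_2+v_2$ is exactly the coincidence of normal segments of length $<\epsilon$ based at $\delta$-close points that \S5.1(8) forbids, unless $q_1=q_2$; and then $v_1=v_2$. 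In particular a point $s\in B_\epsilon(\phi(M))$ cannot have two distinct nearest points $q_1\ne q_2$, since both yield $s=E(q_i,s-q_i)$ with $|s-q_i|=r<\epsilon$. (Alternatively, without mentioning $E$: if $d_{\R^N}(q_1,q_2)\ge\delta$ then $\delta\le|q_1-q_2|\le|q_1-s|+|s-q_2|=2r<2\delta/3$, a contradiction; and if $d_{\R^N}(q_1,q_2)<\delta$ then \S5.1(8) applies directly with $t_i=r$.) This already proves the Lemma as stated.

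To see why the factor $K^{-1}$ belongs in the definition of $\epsilon$ --- and to recover the full tubular-neighborhood description of \S5.1(1) --- I would add that $dE_{(q,v)}$ is singular exactly when $q+v$ is a focal point of $\phi(M)$ along $v$ (\S5.1(4)), and that along a unit normal ray from $q$ the first focal point occurs at Euclidean distance $1/p_{\max}(q,\hat v)$, where $p_{\max}$ is the largest principal curvature of that normal; this is the classical computation in \cite[\S6]{Mil}. Since $K$ is the maximum of the $p_i$ over all unit normals at all points of $\phi(M)$ (\S5.1(7)) and $\epsilon\le K^{-1}$, the map $E$ is an immersion, hence a local diffeomorphism, on $D_\epsilon$; combined with the injectivity and surjectivity above, $E|_{D_\epsilon}$ is a diffeomorphism onto $B_\epsilon(\phi(M))$, so that neighborhood is diffeomorphic to a neighborhood of the zero section of $\nu_\phi$ and each of its points has a unique nearest point in $\phi(M)$.

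The subtlety demanding the most care is the apparent circularity: $\epsilon$ appears in the characterizing property \S5.1(8) of $\delta$, while $\epsilon$ is being \emph{defined} here as $\min\{K^{-1},\delta/3\}$. This is handled by the order of construction: $\delta$ is produced in~(\ref{delta}) from the Implicit-Function-Theorem bounds of Proposition~\ref{prop:three} as a definite length up to which normal segments based at $\delta$-close points cannot meet, and only afterwards is $\epsilon$ set equal to $\min\{K^{-1},\delta/3\}$, which is small enough for \S5.1(8) to apply in the injectivity step. The remaining inputs --- existence of a nearest point from compactness, orthogonality of the nearest-point vector from first variation, and the focal-distance identity --- are routine; the one point to watch is that the focal-distance bound must use $K$ as the supremum over \emph{all} unit normals at \emph{all} points, so that $1/K$ is a uniform lower bound valid on the whole disk bundle.
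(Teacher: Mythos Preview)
Your proposal is correct, and the core argument coincides with the paper's: suppose two nearest points $x\neq y$ at distance $r<\epsilon$ from $b$, use the triangle inequality to bound $|x-y|<2\epsilon\le 2\delta/3<\delta$, and invoke \S5.1(8) to get a contradiction. Your parenthetical ``Alternatively'' paragraph is essentially verbatim the paper's proof, and the paper's first paragraph, like your later one, explains the $K^{-1}$ condition via focal points and the invertibility of $E$ needed for the tubular neighborhood.

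The only genuine difference is packaging: you first prove the stronger statement that $E|_{D_\epsilon}$ is injective on the whole $\epsilon$-disk bundle (not just that nearest points are unique), which is a clean way to get the full diffeomorphism conclusion of \S5.1(1) in one stroke; the paper instead deduces uniqueness directly and leaves the diffeomorphism statement to the cited $\epsilon$-Neighborhood Theorem. Your explicit discussion of the apparent circularity between $\epsilon$ and $\delta$ --- resolved because $\delta$ is actually built in~(\ref{delta}) from $K$ via Proposition~\ref{prop:three}, and only then is $\epsilon$ set to $\min\{K^{-1},\delta/3\}$ --- is a clarification the paper does not spell out inside this proof.
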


 \begin{proof} 
  By
\cite[Lem.~6.3]{Mil}, the focal points (Def.~\ref{fp}) of $\phi(M)$ along the normal line $l =  q + t v$ are precisely the points $q + p_i^{-1}v$, where 
the $p_i$ are the nonzero principal curvatures.
The proof of the $\epsilon$-Neighborhood Theorem in \cite[Thm.~6.24]{Lee} uses the invertibility of the endpoint map, so we must have
$\epsilon < K^{-1}.$
 
 Suppose there exists $b \in B_\epsilon(\phi(M))$ with closest points $x, y \in \phi(M)$. Then  $b= x+t  v_x = y+t'  v_y$ for unit normal vectors $v_x$ at $x$, $v_y$ at $y$, and 
 $|t|,|t'| < \epsilon.$  By definition of $\delta$, we have $d_{\mathbb{R}^N}(x,y) > \delta$.  As in the previous proof, we have
\begin{eqnarray*}
\delta < d_{\mathbb{R}^N}(x,y) &=&|x-y|
=|x-(x+t  v_x) + ( y+t'  v_y) -y| \\
      &\leq& |t| |  v_x| + |t'| |  v_y|
       < 2\epsilon \leq 2\delta/3, 
     \end{eqnarray*} 
  a contradiction.
  \end{proof}

     \medskip

 We can now define $\delta$ in (\ref{delta}) below, after which we explicitly estimate it in the proof of Proposition~\ref{prop:three}. The steps of the estimate are recapped in Remark \ref{rem:one}.
 We first restrict the endpoint map 
  $E: \nu_\phi \to \mathbb{R}^N$ 
to the compact set $W = \{v\in \nu_\phi:|v| \leq .999K^{-1}\}.$
  For fixed $q_0 \in \phi(M)$ and  $(q_0,v_0)\in \nu_{\phi,q_0}\cap W = W_{q_0}$,
   the proof of Lemma 
  \ref{lem:two} shows 
that $DE(q_0,v_0)$ is invertible. 
Therefore, there is a ball of radius $\delta(q_0,v_0)>0$ around $(q_0,v_0)$ on which $E$ is a diffeomorphism. 
  Set $\delta_{q_0} = \delta(q_0,0)$ and
  \begin{eqnarray*}
  A_{q_0}&=&\{ q \in \phi(M): d_{\mathbb{R}^N} \: (q, q_0) < \delta_{q_0} /2 \}.
  \end{eqnarray*}
   We claim that  $E$ is a diffeomorphism on the  the set $B_{q_0} \subset \nu_\phi $ given 
   by
$$
B_{q_0} = \{ (q,v): |v|< \delta_{q_0} /2, q \in A_{q_0} \}.$$
Indeed, for
 $(q_1, v_1)\in B_{q_0}$, we have
$$
 |(q_1, v_1) - (q_0,0)| 
 \leq |(q_1, v_1) -(q_1,0)| +|(q_1,0)- (q_0,0)| 
\leq  \delta_{q_0} + \delta_{q_0} /2
\leq \delta_{q_0}.
$$
 Thus for $(q_1, v_1), (q_2, v_2) \in B_{q_0}$ and $(q_1, v_1)\neq (q_2, v_2)$, we conclude $(q_1,0), (q_2,0) \in A_{q_0}$ and $E(q_1,v_1)  \neq  E (q_2,v_2)$.  Since $E$ is invertible on $B_{q_0},$ it 
 is a diffeomorphism onto its image.

  We set
  \begin{equation}\label{delta}
   \delta 
   = \frac{1}{2} \min\{\delta(q_0,v_0): (q_0,v_0)\in \nu_\phi, |v_0| \leq .999K^{-1}\}.
   \end{equation}
Since $M$ is compact and $|v_0|$ lies in a compact interval, $\delta$ is positive.
In other words, for  $q_1, q_2 \in \phi(M)$ with $d_{\mathbb{R}^N}(q_1,q_2) < \delta$, we have
 $q_1+v_1\neq q_2+v_2$
 for $|v_1|, |v_2|< \delta$ and $(q_1,v_1)\in\nu_{\phi,q_1}, (q_2,v_2)\in \nu_{\phi,q_2}.$

  For a fixed $(q_0,v_0)$, it remains to compute $\delta(q_0,v_0)$ explicitly, after which  $\delta$ in (\ref{delta}) is explicit.
  The computation of $\delta(q_0,v_0)$ uses a quantitative version \cite{CL} of the Implicit Function Theorem 
  given in the next Proposition. The proof is in the Appendix.

To set the notation, let the matrix norm $\Vert A\Vert$ be the sup norm of the absolute values of the entries.
For $G \in C^1(\mathbb{R}^{m+n}, \mathbb{R}^m)$, let $(s_0, y_0) \in \mathbb{R}^{m+n} \times \R^m$ satisfy $G(s_0, y_0) =0 $.
 For fixed $\delta > 0$, set $V_{\delta} = V_{\delta(s_0,y_0)}= \{(s,y) \in \mathbb{R}^{m+n}: |s-s_0| \leq \delta, |y-y_0| \leq \delta \}$.
We focus on the case $G(s,y) = E(s) - y$ for  $m=n$, the usual method to derive the Inverse Function Theorem from the Implicit Function Theorem.

\begin{prop} \label{prop:three}Assume that the $m\times m$ matrix $\partial_{s }G(s_0,y_0)$ of partial derivatives of $G$ in 
the $s$ directions is invertible. Choose $\delta^0 > 0$ such that 
\begin{equation}\label{one}\sup_{(s,y)\in V_{\delta^0}}\Vert {\rm Id} - [\partial_{s }
G(s_0,y_0)]^{-1}\partial_{s }G(s,y)\Vert \leq  1/2.
\end{equation}
 Set \\
{\rm (I)}\  \  \  $B_{\delta^0} = \sup_{(s,y)\in V_{\delta^0}}\Vert\partial_y  G(s,y)\Vert$,\\
{\rm (II)}\ \  \ $P= \Vert\partial_{s }G(s_0,y_0)^{-1}\Vert$,\\
{\rm (III)}\ \  $ \delta^1 = (2PB_{\delta^0})^{-1}\delta^0$.\\
Then for the case $n=m$ and
$G(s,y) =E(s) -y$, 
on the set 
$\{ (s,y) : \Vert s-s_0\Vert < \delta^0, \Vert y-y_0
\Vert < \delta^1, G(s,y) = 0 \}$, 
$E$ has a $C^1$ inverse: $E(s) = y$ iff $s= E^{-1}(y).$
 Equivalently, $E$ is a $C^1$ diffeomorphism on 
 \begin{equation}\label{oneA} E^{-1}(B_{\delta^1}(y_0)) \cap B_{\delta^0}(s_0).
 \end{equation}
\end{prop}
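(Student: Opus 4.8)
The plan is to derive the Inverse Function Theorem statement from the quantitative Implicit Function Theorem in the standard way, tracking constants carefully. First I would set up the auxiliary map whose fixed points are the solutions: for $y$ fixed near $y_0$, define
\[
T_y(s) = s - [\partial_s G(s_0,y_0)]^{-1} G(s,y),
\]
so that $T_y(s) = s$ if and only if $G(s,y) = 0$. The point of condition (\ref{one}) is precisely that $\partial_s T_y = \mathrm{Id} - [\partial_s G(s_0,y_0)]^{-1}\partial_s G(s,y)$ has norm at most $1/2$ on $V_{\delta^0}$, so by the mean value inequality $T_y$ is a $\tfrac12$-contraction in the $s$-variable on any convex $s$-domain contained in $V_{\delta^0}$.

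Next I would verify that $T_y$ maps the closed ball $\{|s-s_0|\le \delta^0\}$ into itself whenever $|y-y_0| < \delta^1$. Writing $T_y(s) - s_0 = (T_y(s) - T_y(s_0)) + (T_y(s_0) - s_0)$, the first term is bounded by $\tfrac12|s-s_0| \le \tfrac12\delta^0$ by the contraction estimate, and the second term is $-[\partial_s G(s_0,y_0)]^{-1} G(s_0,y) = -[\partial_s G(s_0,y_0)]^{-1}(G(s_0,y) - G(s_0,y_0))$, whose norm is at most $P \cdot B_{\delta^0}\cdot |y-y_0| < P B_{\delta^0}\delta^1 = \tfrac12\delta^0$ using (I), (II), (III) and the mean value bound on $\partial_y G$. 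Hence $|T_y(s) - s_0| \le \delta^0$, so the Banach fixed point theorem applies: for each such $y$ there is a unique $s = s(y)$ in the ball with $G(s(y),y) = 0$, which is the asserted bijection $E(s) = y \iff s = E^{-1}(y)$ on the indicated set. Smoothness of $s(y)$ follows from the usual argument — invertibility of $\partial_s G$ persists on $V_{\delta^0}$ by a Neumann series estimate from (\ref{one}), so the standard Implicit Function Theorem gives $C^1$ dependence locally, and uniqueness glues these into a global $C^1$ inverse on the stated domain. Translating to $G(s,y) = E(s) - y$ gives exactly that $E$ is a $C^1$ diffeomorphism from $E^{-1}(B_{\delta^1}(y_0)) \cap B_{\delta^0}(s_0)$ onto $B_{\delta^1}(y_0)$, which is (\ref{oneA}).

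I expect the main obstacle to be bookkeeping rather than conceptual: one must be careful that the ball on which the contraction argument runs is genuinely contained in the region $V_{\delta^0}$ where (\ref{one}) holds, and that the mixing of sup-norms on matrices (as defined just before the Proposition) with Euclidean norms on vectors is handled consistently — a factor depending on $m$ may appear when passing between $\Vert\partial_y G\Vert$ in the entrywise sup norm and the operator bound needed in the mean value inequality. Since the Proposition is quoted from \cite{CL} and its proof is deferred to the Appendix, for the present statement it suffices to observe that all the constants $\delta^0, B_{\delta^0}, P, \delta^1$ are manifestly computable from $G$ (here from $E$, hence from $\phi$ and the coordinate data), so the conclusion is effective; the explicit three-step construction of $\delta^0, \delta^1, \delta^2$ and finally $\delta(q_0,v_0)$ for the concrete endpoint map is carried out in the proof of Proposition~\ref{prop:three} itself.
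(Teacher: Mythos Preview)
Your proof is correct and essentially identical to the paper's Appendix argument: your map $T_y$ is the paper's $\Omega_\lambda$, the $\tfrac12$-contraction from (\ref{one}) and the self-mapping bound via the decomposition $(T_y(s)-T_y(s_0))+(T_y(s_0)-s_0)\le \tfrac12\delta^0 + PB_{\delta^0}\delta^1$ are verbatim the same, and both conclude by the Banach fixed point theorem. The only cosmetic difference is in the $C^1$ step---the paper argues directly (Lipschitz continuity of $g$, then a first-order difference quotient) rather than invoking a Neumann series plus the local IFT---and note that the explicit $\delta^0,\delta^1,\delta^2$ computation you allude to at the end is actually done \emph{after} the Proposition in the main text (Criteria I--III), not in its proof.
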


\medskip
To apply the Proposition, we set $n=m= N$ and $ G((q,v),y) = E(q,v) -y$, where $E$ is the endpoint map. 
We follow the Proposition's labeling in a series of steps: \\
\\
\textbf{ Criterion I:} Independent of the value of $\delta^0 = \delta^0((q_0, v_0), y_0)$, we have

\begin{eqnarray*}
B_{\delta^0} &=& \sup_{((q,v),y)\in V_{\delta^0}}||\partial_y G((q,v),y)||
= \sup_{((q,v),y)\in V_{\delta^0}}||\partial_y (E(q,v)-y)||\\
 &=&\sup_{((q,v),y)\in V_{\delta^0}} \Vert -{\rm Id}\Vert = 1.
\end{eqnarray*}

\noindent\textbf{Criterion II:} By \S5.1(4),(7), 
\[ \partial_{(q,v)}G((q_0,v_0),y_0)=DE(q_0,v_0)\]
is invertible for $|v|< K^{-1}$. 
In the notation of \S5.1(4), 
\begin{align}\label{quick2} \lefteqn{DE(q_0,v_0)=  }\nonumber\\
 & \left( \begin{array} {cccccc}
 \left(\frac{\partial x^1}{\partial q^1}+v^i\frac{\partial w_i^1}{\partial q^1}\right)|_{(q_0,v_0)} & \cdots 
 & \left(\frac{\partial x^1}{\partial q^k}+v^i\frac{\partial w_i^1}{\partial q^k}\right)|_{(q_0,v_0)}
 &  
w^1_{1,q_0}&\cdots& w_{N-k,q_0}^1\\
 \vdots & & \vdots&\vdots&&\vdots \\
 \left(\frac{\partial x^N}{\partial q^1}+v^i\frac{\partial w_i^N}{\partial q^1}\right)|_{(q_0,v_0)} & \cdots &  
 \left(\frac{\partial x^N}{\partial q^k}+v^i\frac{\partial w_i^N}{\partial q^k}\right)|_{(q_0,v_0)}&w^N_{1,q_0}&\cdots&
 w_{N-k,q_0}^N \end{array} \right) 
 \end{align}
${}$
By Cramer's rule, 
\begin{equation}\label{M}P 
=\Vert DE(q_0,v_0)^{-1}\Vert
 = (\det(DE (q_0,v_0)))^{-1} \Vert(DE(q_0,v_0)^*\Vert,
 \end{equation} 
 where $DE(q_0,v_0)^*_{(i,j)}$ is the usual minor of $DE(q_0,v_0)$ obtained by deleting the $i^{\rm th}$ row and 
 $j^{\rm th}$ column.  Since $\phi$ and the $w_i$ are given, we obtain an estimate for $P$.

  \noindent {\bf Criterion III:}    
      We
 now compute  $\delta^1 = \delta^1(q_0,v_0), \delta^0 = \delta^0(q_0,v_0)$ such that (\ref{one}) holds for $((q,v),y)$.
    Since 
   (\ref{one}) is independent of $y$ in our case,  
    we need $\delta^0(q_0,v_0)$ such that  
    \begin{equation}\label{two}|(q,v)|< \delta^0(q_0,v_0) \Rightarrow 
    \Vert {\rm Id} - [DE(q_0,v_0)]^{-1}DE(q,v)\Vert \leq  1/2 .
    \end{equation}
We consider a first order Taylor  expansion of $DE(q,v)$  around $s_0 = (q_0, v_0)$. (Note: The summed index $j$  below refers to coordinates in $\mathbb{R}^N$, not an exponent). For $s = (q,v)$, we have
\begin{eqnarray}\label{three}  DE(s) &=&  
DE(s_0)     + \left( \begin{array} {ccc}
      R^{(1,1)}_j (q,v)(s-s_o)^j & \cdots &
      R^{(1,N)}_j (q,v)(s-s_o)^j \\
      \vdots & & \vdots \\
       R^{(N,1)}_j (q,v)(s-s_o)^j & \cdots &
       R^{(N,N)}_j (q,v)(s-s_o)^j  \end{array} \right)\\
       &\stackrel{\rm def}{=} & DE(s_0) + (R^{(p,r)}_j (q,v)(s-s_o)^j).\nonumber
    \end{eqnarray}  
  As in Criterion II, set
  $f^r_p= \frac{\partial x^r(q)}{\partial q^p}+v^i\frac{\partial w_i^r(q)}{\partial q^p}$  for  all $1\leq p\leq N$, 
   $1\leq r \leq k$, and $f^r_p= w^r_{p,q}$ for
 $1 \leq p \leq N$, $k+1\leq r \leq N$.
 A uniform bound on the error term is given by Taylor's theorem with integral remainder:
 \begin{align*}\label{cs}  \left|  R^{(p,r)}_j   
 (q,v)
 (s-s_0)^j\right|
  &\leq 
\left\vert \int_0^1 (1-t) \partial_j f^r_p((1-t)(q_0,v_0) + t(q,v)) dt\right\vert\cdot\left\vert
(s-s_0)^j\right\vert \\
&\leq \max  \left\{\left|\partial_j f^r_p(q,v)\right| : 1 \leq j \leq N, |v| \leq .999 
  K^{-1}, q \in \phi(M) \right \}
   |s-s_0|\nonumber\\
  &\stackrel{\rm def}{=} L^{(p,r)}_j 
   |s-s_0|.\nonumber
\end{align*}
 Here $\partial_j$ differentiates in the $s$ variable.
Set
 \begin{equation}\label{G} {L} = \max_{j,p,r} \{{L}^{(p,r)}_j\}. 
 \end{equation} 
 
  Plugging (\ref{three}) into the right hand side of (\ref{two}) and canceling the identity matrix, the matrix norm in
  (\ref{two}) becomes
   \begin{eqnarray}\label{four}  \left \Vert [DE(q_0,v_0)]^{-1} (R^{(p,r)}_j (q,v)(s-s_0)^j)\right\Vert
&=& \max_{j,p,r}\left\vert ([DE(q_0,v_0)]^{-1})^{p}_{\ell}
(R^{(\ell,r)}_j (q,v)(s-s_0)^j)\right\vert \nonumber\\
&\leq& N \Vert [DE(q_0,v_0)]^{-1}\Vert \cdot {L} \cdot \delta^0(q_0,v_0),
\end{eqnarray}
where the $N$ comes from the sum over $\ell = 1,\ldots, N$.
        Setting  
        \begin{equation}\label{do} \delta^0(q_0,v_0) = \left[2N\Vert DE(q_0,v_0)^{-1}\Vert\cdot {L} \right]^{-1},
        \end{equation} 
  we conclude  that the estimate (\ref{two}) is satisfied.

 In summary, we now have
\begin{equation}\label{d1}
        \delta^1(q_0,v_0) = (2PB_{\delta^0(q_0,v_0)})^{-1}\delta^0(q_0,v_0) =(2P)^{-1}\delta^0(q_0,v_0) ,
 \end{equation}
by Criterion I.  Thus $\delta^1(q_0, v_0)$ is estimated by Criterion II and III. 

\medskip

        By Proposition \ref{prop:three},  $E$ is a diffeomorphism on $
        E^{-1}(B_{\delta^1(q_0,v_0)}(y_0)) \cap B_{\delta^0(q_0,v_0)}(q_0,v_0)$. To be  explicit, we want to find radius $\delta(q_0,v_0)$
             such that
        \begin{equation}\label{radius} B_{\delta(q_0,v_0)}(q_0,v_0) \subset  E^{-1}(B_{\delta^1(q_0,v_0)}(y_0)) \cap B_{\delta^0(q_0,v_0)}(q_0,v_0).
        \end{equation}
   
 We first find  $\delta^2(q_0,v_0)$ such that  
 $$|(q,v)-(q_0,v_0)|<\delta^2(q_0,v_0) \Rightarrow |E (q,v) - E(q_0,v_0)| = |E (q,v) - y_0| < \delta^1(q_0,v_0).$$
 In  other words, we want
\begin{equation}\label{seven}|(q,v)-(q_0,v_0)|<\delta^2(q_0,v_0) \Rightarrow
E(q,v) \in B_{\delta^1(q_0,v_0)}(y_0).
\end{equation}
As above, we compute $\delta^2(q_0,v_0)$ by a Taylor series expansion of $E$ around $(q_0,v_0)$:
        $$
        E(q,v) = E(q_0,v_0)+ \left( \sum\limits_j R^1_j(q,v)((q,v)-(q_0,v_0))^j, \ldots, \sum\limits_j R^N_j(q,v)((q,v)-(q_0,s_0))^j \right),
        $$
with 
\begin{eqnarray}\label{gp}
        |R^p_j(q,v)| &\leq& \max \left\{ \left|\partial_j (\phi^p+ v^iw_i^p)(q,v) \right| : 1 \leq j \leq N, |v| \leq .999 K^{-1}, q \in \phi(M) \right\} \nonumber\\
        &\stackrel{\rm def}{=}&  {S}^{p}. 
        \end{eqnarray}
For $s_0 = (q_0,v_0), s = (q,v)$, we have
         \begin{eqnarray*}
          |E(s) -E(s_0)|^2
          &=& \sum\limits_{p=1}^N \left(\sum\limits_jR^p_j(s)(s-s_0)^j\right)^2
 \leq \sum\limits_{p=1}^N \left(\sum\limits_j|R^p_j(s)|^2\right) |s-s_0|^2\\ 
&\leq& N\left(\sum\limits_{p=1}^N |{S}^p|^2 \right)|s-s_0|^2
           \leq \sum\limits_{p=1}^N \sum\limits_j|{S}^p \delta^2(q_0,v_0)|^2.
           \end{eqnarray*}
Therefore, for  
\begin{equation}\label{d3}\delta^2(q_0,v_0) = \delta^1(q_0,v_0)\left (N\sum\limits_{p=1}^N 
|{S}^p |^2 \right )^{-1/2},
\end{equation}
estimate (\ref{seven}) holds.  Finally, setting
\begin{equation}\label{delta est}\delta(q_0,v_0) = \min \{\delta^2(q_0,v_0), \delta^0(q_0,v_0)\}
\end{equation}
  accomplishes (\ref{radius}).
 \medskip

By Lemmas \ref{lem:three}, \ref{lem:four}, and using (\ref{delta}) to define $\delta$,  we know that Theorem \ref{thm:two} holds,  {\it i.e.,} $\phi_t$ is injective,   for 
\begin{equation}\label{tstar}t^*
 < \min\{K^{-1}, \delta_{H}/3, \delta/3\}.
 \end{equation}
 If we prove that $\delta_H> \delta$, then we get injectivity of $\phi_t$ for $t^* <\min\{K^{-1}, \delta/3\}$, which is Theorem \ref{thm:two}.

By the definition of $\delta$ in \S3.1(8), we have $x,y \in \phi(M)$ and $d_{\mathbb{R}^N}(x,y) < \delta$ implies  $x+t_1 v_x \neq y+t_2 v_y$ for $|t_i| < \epsilon$ and for any unit normal vectors $v_x, v_y$ at 
$x,y,$ resp. By Lemma \ref{lem:two}, 
 for $d_{\mathbb{R}^N}(x,y) < \delta_{H_t} = \delta_{H_t}(u)$ 
for a fixed normal vector field $u$ of length at most one, we have $x+t u_x \neq y+t u_y$.
(By the remarks above Lemma \ref{lem:two}, we also have $|t|<\epsilon$ here.)  Since $\delta$ does not depend on a choice of vector field $u$, we have $\delta \leq \delta_{H_t}(u).$  This implies
$\delta \leq \delta_H.$  Thus we can conclude that $\phi_t$ is injective   for 
$t^* < \min\{K^{-1}, \delta/3\}$, and the proof of Theorem \ref{thm:two} is complete.

 \begin{rem}\label{rem:one}  {\rm
We review the explicit lower bound for $\delta$.  For 
$L$ defined by (\ref{G}), $\delta^0(q_0,v_0)$ is defined by (\ref{do}).  
 For $P$ defined by (\ref{M}), $\delta^1(q_0,v_0)$ is defined by (\ref{d1}).
For  $S^p$ defined in (\ref{gp}),    
 $\delta^2(q_0,v_0)$ is defined in (\ref{d3}).  Then (\ref{delta est}) defines $\delta(q_0, v_0).$  Finally, (\ref{delta}) defines $\delta.$  
 
 In particular,  lower bounds on  $L,$ $P$, 
 and $S^p$  will give a lower bound on $\delta.$
These constants depend on $q$-derivatives ({\it i.e.}, $M$ coordinate derivatives) of the $\R^N$ coordinates of $\phi$ and of vectors in $\np$ (see {\it e.g.}, (\ref{quick2})).  Since the normal bundle is determined by $M$ and $\phi$,
our estimates are explicit in the sense of Remark \ref{rem:zero}.  }
 \end{rem}

\medskip

\subsection{The Main Theorem}

  Since $M$ is compact and since $\phi_t$ is an injective immersion for $|t| \leq t^*$ by Theorem \ref{thm:two}, 
 by Prop.~\ref{prop:1.1}
  we obtain the main result that $\phi_t$ is an embedding for $t$ less than an explicit 
  $t^*$.

\begin{thm} \label{thm:three} Let $u$ be a normal vector field of length at most one along $\phi(M) \subset \mathbb{R} ^{N}$. Let $t^* = \min\{K^{-1}, \delta/3 \}$, with $K$ defined in \S5.1(7) and $\delta$ estimated in Remark \ref{rem:one}. Then $\phi_t: M \rightarrow \mathbb{R}^N$ given by $m \mapsto \phi(m) + tu_{\phi(m)}$ is an embedding for $|t| \leq t^*$.
\end{thm}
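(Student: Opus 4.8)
The plan is to assemble the main theorem directly from the two results already proved, using the standard fact that a compact manifold maps to an embedding precisely when the map is an injective immersion. First I would invoke Proposition~\ref{prop:two}: for any normal vector field $u$ along $\phi(M)$ with $|u_{\phi(m)}|\le 1$, the deformed map $\phi_t$ is an immersion for $|t|<\epsilon$, where $\epsilon$ is as in \S5.1(1). Second I would invoke Theorem~\ref{thm:two}: for the same $u$, the map $\phi_t$ is injective for $|t|\le t^* = \min\{K^{-1},\delta/3\}$. Since by Lemma~\ref{lem:four} we have $\epsilon = \min\{K^{-1},\delta/3\} = t^*$, the immersion range $|t|<\epsilon$ contains all $|t| < t^*$; so on the (closed) interval $|t|\le t^*$ I would want to be slightly careful at the endpoint, noting that the bound in Lemma~\ref{lem:four} is $\epsilon = \min\{K^{-1},\delta/3\}$ and the principal-curvature obstruction is strict ($\epsilon < K^{-1}$ in the focal-point argument), so for $|t|\le t^*$ the map $\phi_t$ is both an immersion and injective.

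Third, I would combine: $\phi_t$ is an injective immersion of the compact manifold $M$ into $\mathbb{R}^N$, hence a topological embedding, for all $|t|\le t^*$. This uses only the elementary fact (recalled in \S\ref{sec:flows}) that an injective immersion from a compact space is a homeomorphism onto its image with the subspace topology, since a continuous injection from a compact space to a Hausdorff space is a closed map, hence an embedding.

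Finally, I would record that the claimed explicitness holds: $K$ is the maximal principal curvature of $\phi(M)$, computed from the second fundamental form in local coordinates on $M$ as in \S5.1(6)--(7), and $\delta$ is bounded below by the chain of estimates (\ref{do}), (\ref{d1}), (\ref{d3}), (\ref{delta est}), (\ref{delta}) recapped in Remark~\ref{rem:one}, all expressed through $q$-derivatives of the $\mathbb{R}^N$-coordinates of $\phi$ and of an orthonormal frame for $\nu_\phi$ (which is itself determined by $M$ and $\phi$, per Remark~\ref{rem:zero}). In particular this applies to $u = k_\phi^{-1}\cdot\nabla P_\phi$ with $k_\phi = \max_{x\in M}\Vert\nabla P_{\phi(x)}\Vert$, which is a normal vector field of length at most one by Theorem~\ref{thm:one}, giving the discretized-gradient-flow step-length bound.

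There is essentially no new mathematical content to produce here — the theorem is a bookkeeping corollary of Proposition~\ref{prop:two} and Theorem~\ref{thm:two}. The only place demanding care, and the step I expect to be the (minor) obstacle, is reconciling the half-open immersion range $|t|<\epsilon$ from Proposition~\ref{prop:two} with the closed injectivity range $|t|\le t^*$ in Theorem~\ref{thm:two}: one must verify that $t^* \le \epsilon$ strictly enough (via $\epsilon < K^{-1}$ and $t^* = \min\{K^{-1},\delta/3\}$, noting $\delta < \epsilon$ is not needed, only $\delta/3 < \epsilon$) that $\phi_{t^*}$ itself is still an immersion, so that the embedding conclusion genuinely holds on the full closed interval $|t|\le t^*$ rather than only on its interior.
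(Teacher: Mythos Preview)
Your proposal is correct and follows exactly the paper's own argument: the paper simply states that since $M$ is compact and $\phi_t$ is an injective immersion for $|t|\le t^*$ by Proposition~\ref{prop:two} and Theorem~\ref{thm:two}, $\phi_t$ is an embedding. You are in fact more careful than the paper about the open-versus-closed endpoint discrepancy between Proposition~\ref{prop:two} ($|t|<\epsilon$) and Theorem~\ref{thm:two} ($|t|\le t^*$ with $t^*=\epsilon$); the paper glosses over this point entirely.
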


\section{Discussion}
In this paper, we have proposed treating manifold learning by gradient flow techniques that are standard in much of machine learning.  By doing gradient flow in the infinite dimensional space of embeddings of a fixed manifold $M$ into $\R^N$, we avoid parametric and RKHS methods. These methods typically restrict the class of manifolds considered to a finite dimensional space, which speeds up computation time at the cost of perhaps oversimplifying the problem.  In our approach, we give both a theoretical reason to move only in normal directions to the embedded manifold and theoretical lower bounds on the existence for each step of a good discretized version of gradient flow on the space of embeddings.  However, this paper does not discuss computational issues, which must be addressed in future work.  In particular, one has to recompute the estimates for the maximal time $t^*$ of travel after each step. This reflects the theoretical issue that the gradient flow may leave the space of embeddings in finite time. It may be possible to add a penalty term to the objective function that forces the gradient flow to stay in the space of embeddings. This
 new term would involve the bounds we computed on both local quantities like $K$ and global quantities like $\delta$ in \S5.1.

 There are several practical and theoretical issues raised by this approach.  On the practical side, if $M$ flows discretely in $k$ steps to a Riemannian manifold $M_k$ with a thin neck,  as typically happens in mean curvature flow, then in Thm.~\ref{thm:three} $K$ will be very large and $t^* = t^*_k$ will be small at $M_k$.  Thus the 
discretized gradient flow will essentially stop.  It may be reasonable to pick the first $k$ such that $K$ at $M_k$ exceeds a specified threshold. We then backtrack to $M_{k-1}$ (or even further back to some $M_{k-r}$ for some $r>1$) and move to $M'_{k}$ using the gradient at $M_{k-1}$ and new step size $\bar t_k < t^*_k$, {\it e.g.,}  $\bar t_k = (1/2) t^*_k.$ Since the gradient vector field at $M'_{k}$ is different from the gradient vector field at $M_k$, the discretized flow may move $M'_{k}$  to $M'_{k+1}$ with $K$ at $M'_{k+1}$ still below the threshold.  Thus we may be able to extend the flow for an increased number of steps.

There are two theoretical issues that need further examination.  The first is the choice of $M$:  how is this manifold specified?  Based on Riemannian geometry estimates dating to the 1980s, it is reasonable to assume that we want to consider manifolds of a fixed dimension with {\it a priori} a lower bound on volume, an upper bound on diameter, and two-sided bounds on sectional curvature.
Cheeger's finiteness theorem \cite{cheeger} asserts that there are only a finite number of diffeomorphism classes among all such manifolds.  (It would be interesting to determine if the class ${\mathcal G}(d,V,\tau)$ in 
\cite{fefferman2016} has a similar finiteness theorem. We note that the approach of Fefferman {\it et al.} has the strong advantage of not specifying the diffeomorphism type of $M$.) However, while this in theory provides us with a finite list of choices, the proof of the finiteness theorem is nonconstructive.  In practice, in many cases we might as well assume that $M$ is the closed unit ball $B^k$ in $\R^k$. For example, in the famous Swiss roll examples, the data set appears to lie on the image of 
a severely deformed $B^2$.  In contrast, if the training data appears 
 to lie on a deformed torus, $B^2$ is a worse choice for $M$ than the standard torus.

Perhaps even more importantly, it is unclear how to specify the dimension of $M$ in advance.  This has been discussed in the literature: see {\it e.g.} \cite{WangMarron2008} and its references for work done before the last decade,  and \cite{GranataCarnevale2016} for more recent work.  In these works, issues such as the potentially 
fractal/Hausdorff dimension of the data set have been discussed.  From a more geometric mindset,
 we could speculatively start with a $k$-manifold, and hope that in the long run, $M$ would collapse in the sense of Cheeger-Gromov \cite{cheeger-gromov} to a lower dimensional manifold of ``best" dimension.  This would address the issue that the initial choice for $M$ has to be modified as more data is considered.  Even more speculatively, since all Riemannian manifolds are via cut locus arguments homeomorphic to a closed ball with gluings on the boundary, we could start with the $k$-ball $B^k$, add a regularization term, like 
the volume of $\partial B^k = S^{k-1},$ that penalizes the existence of a boundary, and hope that long time flow 
provides both dimension collapse and boundary gluing.  We have no evidence that this will work, but a low dimensional computation is potentially feasible.

\section*{Acknowledgements}
Our thanks to  Carlangelo Liverani for allowing us to use his Quantitative Implicit Function Theorem. We are also grateful to  Qinxun Bai, Andres Larrain-Hubach, Drew Lohn, and the referee for their helpful suggestions.
        
 \appendix     
\section{ The Quantitative Implicit Function Theorem}

This quantitative version of the Implicit Function theorem and its
 proof 
 are from \cite{CL}  (see also \cite[Appendix A]{LC}).\\

For notation, recall that $\Vert A\Vert$ is the sup norm of the absolute values of the entries of a matrix $A$.  For  fixed $(x_0, \lambda_0) \in \R^m \times \R^n$ and fixed $\delta > 0$, set $V_{\delta} = V_{\delta(x_0,\lambda_0)}= \{(x,\lambda) \in \mathbb{R}^{m+n}: |x-x_0| \leq \delta, |\lambda-\lambda_0| \leq \delta \}$.

For $F \in C^1(\mathbb{R}^{m+n}, \mathbb{R}^m)$, let $(x_0, \lambda_0) \in \mathbb{R}^m \times \mathbb{R}^n$ satisfy $F(x_0, \lambda_0) =0 $.

\begin{thm}[Quantitative Implicit Function Theorem]
Assume that the $m\times m$ matrix $\partial_xF(x_0,\lambda_0)$ is invertible and choose $\delta > 
0$ such that 
$$\sup_{(x,\lambda)\in V_{\delta}}||{\rm Id} - [\partial_xF(x_0,\lambda_0)]^{-1}\partial_xF(x,
\lambda)|| \leq 1/2.$$
 Let $B_{\delta} = \sup_{(x,\lambda)\in V_{\delta}}|| \partial_{\lambda}F(x,
\lambda)||$ and $M = ||\partial_xF(x_0,\lambda_0)^{-1}||$. Set $\delta^1 = (2MB_{\delta})^{-1}\delta$, and 
set $\Gamma_{\delta^1} = \{ \lambda \in \mathbb{R}^n : |\lambda - \lambda_0| < \delta^1 \}$,
$V_{\delta,\delta^1} = \{ (x,\lambda) \in \mathbb{R}^{m+n} : | x-x_0| \leq \delta, |\lambda -\lambda_0| \leq \delta^1 \}$.
 
Then there exists $g \in C^1(\Gamma_{\delta^1}, \mathbb{R}^m)$ such that all  solutions of the 
equation $F(x, \lambda) =0$ in the set $V_{\delta,\delta^1}$ 
 are given by $(g(\lambda), \lambda)$.
 In addition, 
 $ \partial_{\lambda}g(\lambda) = - (\partial_x F(g(\lambda), \lambda))^{-1} \partial_{\lambda} F(g(\lambda), \lambda)$. 
\end{thm}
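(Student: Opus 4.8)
The plan is to derive this quantitative Implicit Function Theorem from the Banach fixed point theorem applied to the natural Newton-type contraction map. For fixed $\lambda \in \Gamma_{\delta^1}$, define $T_\lambda : \overline{B_\delta(x_0)} \to \mathbb{R}^m$ by
$$T_\lambda(x) = x - [\partial_x F(x_0,\lambda_0)]^{-1} F(x,\lambda).$$
A point $x$ is fixed by $T_\lambda$ iff $F(x,\lambda)=0$, since $[\partial_x F(x_0,\lambda_0)]^{-1}$ is invertible. So it suffices to show $T_\lambda$ has a unique fixed point in $\overline{B_\delta(x_0)}$ for each such $\lambda$, and then set $g(\lambda)$ equal to that fixed point.

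First I would verify the contraction estimate. Computing $\partial_x T_\lambda(x) = {\rm Id} - [\partial_x F(x_0,\lambda_0)]^{-1}\partial_x F(x,\lambda)$, the hypothesis on $\delta$ gives $\|\partial_x T_\lambda(x)\| \le 1/2$ for all $(x,\lambda) \in V_\delta$, hence by the mean value inequality $|T_\lambda(x) - T_\lambda(x')| \le \tfrac12 |x-x'|$ for $x,x' \in \overline{B_\delta(x_0)}$. Next I would check that $T_\lambda$ maps $\overline{B_\delta(x_0)}$ into itself: write
$$T_\lambda(x) - x_0 = \big(T_\lambda(x) - T_\lambda(x_0)\big) + \big(T_\lambda(x_0) - x_0\big),$$
bound the first term by $\tfrac12|x-x_0| \le \tfrac12\delta$, and bound the second by $\|[\partial_x F(x_0,\lambda_0)]^{-1}\|\cdot|F(x_0,\lambda)| = M\,|F(x_0,\lambda) - F(x_0,\lambda_0)|$. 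Estimating $|F(x_0,\lambda)-F(x_0,\lambda_0)| \le B_\delta|\lambda-\lambda_0| < B_\delta\,\delta^1 = (2M)^{-1}\delta$ via the bound on $\partial_\lambda F$ on $V_\delta$, the second term is $< \delta/2$, so $|T_\lambda(x)-x_0| < \delta$. Thus $T_\lambda$ is a contraction of a complete metric space into itself, and the Banach fixed point theorem yields a unique fixed point $g(\lambda)$.

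It then remains to establish that $g$ is $C^1$ with the stated derivative formula. Here I would first argue $g$ is continuous by the standard uniform-contraction argument: for $\lambda,\lambda'$,
$$|g(\lambda)-g(\lambda')| = |T_\lambda(g(\lambda)) - T_{\lambda'}(g(\lambda'))| \le \tfrac12|g(\lambda)-g(\lambda')| + |T_\lambda(g(\lambda')) - T_{\lambda'}(g(\lambda'))|,$$
and the last term is controlled by $M\,|F(g(\lambda'),\lambda) - F(g(\lambda'),\lambda')| \le MB_\delta|\lambda-\lambda'|$, giving $|g(\lambda)-g(\lambda')|\le 2MB_\delta|\lambda-\lambda'|$, i.e. Lipschitz continuity. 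For differentiability, since $\partial_x F(g(\lambda),\lambda)$ is close (within $1/2$ in the relevant sense, after multiplying by $[\partial_x F(x_0,\lambda_0)]^{-1}$) to an invertible matrix it is itself invertible on $V_\delta$; differentiating the identity $F(g(\lambda),\lambda) \equiv 0$ by the chain rule gives $\partial_x F(g(\lambda),\lambda)\,\partial_\lambda g(\lambda) + \partial_\lambda F(g(\lambda),\lambda) = 0$, which rearranges to the asserted formula, with the implicit-function-type estimate on the difference quotient of $g$ making this rigorous.

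The main obstacle is the self-map step: one must confirm that $\delta^1$ was chosen small enough that $T_\lambda$ does not push points out of $\overline{B_\delta(x_0)}$, and this is exactly where the definition $\delta^1 = (2MB_\delta)^{-1}\delta$ is used, together with the bound $B_\delta$ on $\partial_\lambda F$ over the slab $V_\delta$ (not merely at the base point). The $C^1$ regularity of $g$ is routine once continuity is in hand, and uniqueness of solutions in $V_{\delta,\delta^1}$ follows immediately from uniqueness of the fixed point.
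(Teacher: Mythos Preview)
Your proposal is correct and follows essentially the same route as the paper: the map $T_\lambda$ is exactly the paper's $\Omega_\lambda$, the self-map and contraction estimates are identical, and the Lipschitz-then-differentiate argument for $g$ matches the paper's treatment. The only cosmetic difference is that the paper establishes differentiability directly via a first-order expansion of $F(g(\lambda+h),\lambda+h)-F(g(\lambda),\lambda)$ rather than invoking the chain rule, but this is the ``implicit-function-type estimate on the difference quotient'' you already flag as needed to make the chain-rule step rigorous.
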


\begin{proof} 

 Take $\lambda\in \Gamma_{\delta^1} = \{ |\lambda - \lambda_0| < \delta^1\}$. 
 Consider $U_{\delta} = \{x \in \mathbb{R}^m : |x-x_0| \leq \delta \}$ and $\Omega_\lambda : U_{\delta} \rightarrow \mathbb{R}^m$ defined by
$$
 \Omega_{\lambda}(x) = x- \partial_xF(x_0,\lambda_0)^{-1}F(x, \lambda).
 $$
For $x \in U_\delta, F(x, \lambda) =0$ is equivalent to $x = \Omega_{\lambda} (x)$. 
We have 
$$
  |\Omega_{\lambda}(x_0) - \Omega_{\lambda_0}(x_0)| \leq M | F(x_0,\lambda) - F (x_0, \lambda_0)| \leq MB_{\delta}\delta^1. 
$$
In addition, $|\partial_x \Omega_{\lambda}| =| {\rm Id}  - \partial_xF(x_0, \lambda_0)^{-1}
\partial_xF(x, \lambda)| \leq 1/2$, so  
$| \Omega_{\lambda}(x) -\Omega_\lambda(x_0)| \leq \frac{1}{2}|x-x_0|.$
Thus
\begin{align*}
|\Omega_{\lambda}(x) -x_0| &\leq | \Omega_{\lambda}(x) -\Omega_{\lambda}(x_0)| + 
|\Omega_\lambda(x_0) - x_0|\\
&\leq 
\frac{1}{2} |x-x_0| + MB_{\delta}\delta^1 \leq \delta. 
 \end{align*}
 Thus $\Omega_\lambda$ is a contraction on $U_\delta$, and $\Omega_\lambda(x) = x$ has a unique solution $x = g(\lambda)$ by the Contraction Fixed Point Theorem.
We have therefore obtained a function $g: \Gamma_{\delta^1} 
\rightarrow U_\delta$ such that $F(g(\lambda),\lambda) =0$. All solutions in $V_{\delta,\delta^1}$ are of this form: if $F(x_1,\lambda_1) = 0$, then
$$ |x_1-g(\lambda_1)| = |\Omega_{\lambda_1}(x_1) - \Omega_{\lambda_1}(g(\lambda_1))|
\leq \frac{1}{2} |x_1 - g(\lambda_1)|,$$
so $x_1 = g(\lambda_1).$

For the final statement in the Theorem, let $\lambda, \lambda' \in \Gamma_{\delta^1}$. As above, we have
$$
|g(\lambda) - g(\lambda') | \leq \frac{1}{2} |g(\lambda) - g(\lambda') | + M B_{\delta} |\lambda - \lambda '| $$
This yields the Lipschitz continuity of $g$. To obtain  differentiability,  
by Taylor's theorem for $F\in C^1$ and the  Lipschitz continuity of $g$, we obtain,  for $h \in \mathbb{R}^n$, 
\begin{align*}\MoveEqLeft{0 = \lim_{|h|\to 0} |h|^{-1}|F(g(\lambda + h),\lambda+h) - F(g(\lambda), \lambda)| }\\
&= \lim_{|h|\to 0} |h|^{-1} | F(g(\lambda + h),\lambda+h)- F(g(\lambda), \lambda+h)
+ F(g(\lambda), \lambda+h) - F(g(\lambda),\lambda)| \\
&= \lim_{|h|\to 0} |h|^{-1} |\partial_xF(g(\lambda),\lambda+h)(g(\lambda+h) - g(\lambda))
+  \partial_\lambda F(g(\lambda),\lambda)(\lambda+h - \lambda)|\\
&=  \partial_xF(g(\lambda),\lambda) \lim_{h\to 0}|h|^{-1} |g(\lambda+h) - g(\lambda) 
+ (\partial_xF(g(\lambda),\lambda) )^{-1} |\partial_\lambda F(g(\lambda),\lambda)|.
\end{align*}
Since $ \partial_xF(g(\lambda),\lambda)\neq 0,$  we get
$ \partial_{\lambda}g(\lambda) = - (\partial_x F(g(\lambda), \lambda))^{-1} \partial_{\lambda} F(g(\lambda), \lambda)$. 
\end{proof}
  
 \bibliographystyle{amsplain}
\bibliography{Gold_Rosenberg_GradientFlow}  

\end{document}